\documentclass[12pt]{article}

\usepackage{amsfonts, amsmath, amsthm, amssymb}

\usepackage{indentfirst}
\usepackage[T2A]{fontenc}
\usepackage[cp1251]{inputenc}
\usepackage[english]{babel}

\usepackage{xcolor,graphics}

\usepackage[unicode]{hyperref}

\newcommand{\mes}{\operatorname{mes}}
\newcommand{\bigzero}{\text{\rm\Large0}}
\newcommand{\diag}{\operatorname{diag}}

\newtheorem{theorem}{Theorem}
\newtheorem{theorem*}{Theorem}
\newtheorem{lemma}{Lemma}
\newtheorem{corollary}{Corollary}

\textwidth=16cm
\textheight=21cm
\oddsidemargin=0cm
\evensidemargin=0cm
\topmargin=0cm

\title{Distribution of real algebraic integers}
\author{Denis V. Koleda}
\date{}

\begin{document}
\maketitle

\begin{abstract}
In the paper, we study the asymptotic distribution of real algebraic integers of fixed degree as their na\"{\i}ve height tends to infinity. For an arbitrary interval $I \subset \mathbb{R}$ and sufficiently large $Q>0$, we obtain an asymptotic formula for the number of algebraic integers $\alpha\in I$ of fixed degree $n$ and na\"{\i}ve height $H(\alpha)\le Q$.
In particular, we show that the real algebraic integers of degree $n$, with their height growing, tend to be distributed like the real algebraic numbers of degree $n-1$.
However, we reveal two symmetric ``plateaux'', where the distribution of real algebraic integers statistically resembles the rational integers.
\end{abstract}

\section{Introduction and main results}

\subsection{Basic definitions}

Let $p(x) = a_n x^n + \ldots + a_1 x + a_0$ be a polynomial of degree $n$,
and let $H(p)$ be its (\emph{na\"{\i}ve}, or \emph{usual}) \emph{height} defined as
\begin{equation}\label{eq-n-height-def}
H(p) = \max_{0\le i\le n} |a_i|,
\end{equation}

Let $\alpha\in\mathbb{C}$ be an algebraic number.
We define the \emph{minimal polynomial} of $\alpha$ as a nonzero polynomial~$p$ of the minimal degree $\deg(p)$ with integer coprime coefficients and positive leading coefficient such that $p(\alpha)=0$.

For the algebraic number $\alpha$, its \emph{degree} $\deg(\alpha)$ and \emph{height} $H(\alpha)$ are defined as the degree and height of the corresponding minimal polynomial.

An algebraic number is called an \emph{algebraic integer} if its minimal polynomial is monic, that is, has the leading coefficient 1.

Distinct algebraic numbers $\alpha_1$ and $\alpha_2$ are called \emph{conjugate} if they have the same minimal polynomial.
Obviously, any algebraic number $\alpha$ of degree $n$ has $n-1$ distinct conjugates (different from $\alpha$).

A real algebraic integer $\alpha$ is called a \emph{Perron number} if all its conjugates are less than~$\alpha$ in absolute value.

We denote by $\# S$ the number of elements in a finite set $S$;
$\mes_{k} S$ denotes the $k$--dimensional Lebesgue measure of a set $S \subset \mathbb{R}^{d}$ ($k \le d$).
The length of an interval $I$ is denoted by $|I|$.
The Euclidean norm of a vector $\mathbf{x}\in\mathbb{R}^k$ is denoted by $\|\mathbf{x}\|$.
To denote asymptotic relations between functions, we use the Vinogradov symbol $\ll$:
expression $f \ll g$ denotes that $f \le c\,g$, where $c$ is a constant depending on the degree $n$ of algebraic numbers.
Expression $f \asymp g$ is used for asymptotically equivalent functions, that is, $g \ll f \ll g$.
Notation $f \ll_{x_1,x_2,\ldots}\, g$ implies that the implicit constant depends only on parameters $x_1,x_2,\ldots$. Asymptotic equivalence $f \asymp_{x_1,x_2,\ldots}\, g$ is defined by analogy.

In the paper, we assume that the degree $n$ is arbitrary but fixed, and the parameter~$Q$, which bounds heights of polynomials and numbers, tends to infinity.

Note that we consider all algebraic numbers as complex numbers, i.e. elements of $\mathbb{C}$.

\subsection{Background}

The paper was motivated by results from two overlapping research areas of Diophantine approximation.
Both these areas widely employ the height \eqref{eq-n-height-def} as a height function.

One of these areas deals with
sets of numbers well approximable by algebraic numbers
and involves well-spaced subsets of algebraic numbers known as regular systems.
The idea of a regular system was developed as a useful tool for calculating the Hausdorff dimension in the paper by Baker and Schmidt~\cite{BakSch70}, who proved that the real algebraic numbers form a regular system. See \cite{BernDod1999} by Bernik and Dodson, and \cite{Bere99} by Beresnevich for gradual improving the spacing parameters of such regular systems. In~2002, Bugeaud~\cite{Bugeaud2002} proved that the real algebraic integers form a regular system too (with spacing parameters similar to the ones from \cite{Bere99}). In simple language, that is, there exists a constant $c_n$ depending on $n$ only such that for any interval $I\subseteq[-1,1]$ for all sufficiently large $Q\ge Q_0(I)$ there exist at least
$c_n |I| Q^n$
algebraic integers $\alpha_1,\dots,\alpha_k \in I$ of degree $n$ and height at most $Q$ such that the distances between them are at least
$Q^{-n}$.
See \cite{Bug2004} and \cite{BernDod1999} for some history and further references on the use of regular systems in calculation of the Hausdorff dimension of some sets.

Another area is concerned with the theory of Farey sequences and their generalizations.
In~1971, Brown and Mahler \cite{BroMah1971} suggested a generalization of the Farey sequences for algebraic numbers of higher degrees
and posed several questions about these sequences.
According to \cite{BroMah1971}, the \emph{$n$-th degree Farey sequence of order $Q$} is the sequence of all real roots of the set of integer polynomials of degree (at most) $n$ and height at most $Q$.
The elements of the 1-st degree sequence lying within $[0,1]$ form the well-known classical Farey sequence (see \cite{Koleda2017} for details)
and tend to be distributed uniformly in $[0,1]$ as $Q\to\infty$.
For $n\ge 2$ it turned out \cite{Koleda2017} that, as $Q$ gets large, the distribution of the $n$-th degree Farey sequence never tends to be uniform, however, it can be described in terms of a density function.
Namely, the following theorem was proved.
\begin{theorem}[\cite{Koleda2017}]\label{thm-algnum}
The number $\Phi_n(Q,I)$ of algebraic numbers of degree $n$ and height at most~$Q$ lying in an interval $I$
equals to
\[
\Phi_n(Q,I)=\frac{Q^{n+1}}{2 \zeta(n+1)} \int_I \phi_n(t) \,dt + O\left(Q^n (\ln Q)^{\ell(n)}\right),
\]
where $\zeta(\cdot)$ is the Riemann zeta function;
the implicit big-O-constant depends on the degree~$n$ only;
the power of the logarithm is equal to:
\[
\ell(n) =
\begin{cases}
1, & n\le 2,\\
0, & n\ge 3.
\end{cases}
\]
The function $\phi_n$ is given by the formula:
\begin{equation}\label{eq-phi-def}
  \phi_n(t) = \int\limits_{G_n(t)} \left|\sum_{k=1}^n k p_k t^{k-1}\right|\,dp_1\ldots\,dp_n, \qquad t \in \mathbb{R},
\end{equation}
where
\begin{equation}\label{eq-G-def}
G_n(t) = \left\{(p_1,\ldots,p_n) \in \mathbb{R}^n : \ \max\limits_{1\le k\le n}|p_k| \le 1, \ \left| \sum_{k=1}^n p_k t^k \right| \le 1 \right\},
\end{equation}
and satisfies the following function equations:
\begin{equation}\label{eq-phi-eq}
\phi_n(-t) = \phi_n(t), \qquad \phi_n(t^{-1}) = t^2 \phi_n(t).
\end{equation}
\end{theorem}

The expression \eqref{eq-phi-def} defines a continuous positive piecewise function. For example \cite{Koleda2015}, $\phi_2(t)$ is a piecewise rational function.
If $|t|\le 1-\frac{1}{\sqrt{2}}\approx 0{,}29$, it is possible \cite[Remark 2 in Section 4]{Koleda2017} to represent \eqref{eq-phi-def} by an explicit analytic expression for all $n$:
\begin{equation*}\label{eq-analit-f}
\phi_{n}(t) = 2^{n-1} \left(1+\frac13\sum_{k=1}^{n-1} (k+1)^2 t^{2k}\right).
\end{equation*}
From \eqref{eq-phi-eq} (using the second functional equation) one can easily obtain for $|t|\ge 2+\sqrt{2}\approx 3{,}41$ that
\begin{equation}\label{eq-analit-f2}
\phi_{n}(t) = \frac{2^{n-1}}{t^2} \left(1+\frac13\sum_{k=1}^{n-1} \frac{(k+1)^2} {t^{2k}}\right).
\end{equation}

It is worth to notice that the function $\phi_n$ from Theorem \ref{thm-algnum} coincides (up to a constant factor) with the density function of real zeros of $G_n$ (cf. \cite{Za05}).
Another formula representing $\phi_n$ can be deduced from \cite{Kac1949}.

\subsection{Main results}

The arranging of the real algebraic numbers into the generalized Farey sequence \cite{BroMah1971} suggests a way of ordering and counting real algebraic integers.

Let $\mathcal{O}_n$ denote the set of algebraic integers of degree $n$ (over $\mathbb{Q}$).
For a set $S \subseteq \mathbb{R}$,
let $\Omega_n(Q,S)$ be the number of algebraic integers $\alpha \in S$ of degree $n$ and height at most $Q$:
\[
\Omega_n(Q,S) := \#\left\{\alpha \in \mathcal{O}_n \cap S : H(\alpha)\le Q \right\}.
\]

Note that the algebraic integers of degree 1 are simply the rational integers, which are nowhere dense in the real line. Therefore, we assume $n\ge 2$.

We prove the following two theorems.

\begin{theorem}\label{thm-main}
Let $n\ge 2$ be a fixed integer. And let $I\subseteq\mathbb{R}$ be an interval.
Then, as $Q\to\infty$,
\begin{equation}\label{eq-main}
\frac{\Omega_n(Q,I)}{(2Q)^n} = 2^{-n} \int_I \widetilde\omega_n(Q^{-1},t)\,dt + r_n(Q,I),
\end{equation}
where the function $\widetilde\omega_n(\xi,t)$ has the form
\begin{equation}\label{eq-omega}
\widetilde\omega_n(\xi, t) = \begin{cases}
\phi_{n-1}(t) + 2^{n-2}\xi^2 t^2, & |t|\le \xi^{-1/2},\\
2^{n-1}\xi, & \xi^{-1/2}<|t| < \xi^{-1}+1,\\
0, & |t|\ge\xi^{-1}+1,
\end{cases}
\end{equation}
with the same $\phi_n(t)$ as in \eqref{eq-phi-def};
the remainder term $r_n(Q,I)$ satisfies the estimate
\begin{equation}\label{eq-rem-main}
r_n(Q,I) = \begin{cases}
O(Q^{-1}), & n\ge 3,\\
O\!\left(\frac{\ln Q}{Q}\right), & n=2,\\
\end{cases}
\end{equation}
where the implicit big-O-constants depend only on~$n$.
\end{theorem}

Note that in \eqref{eq-omega} the bounding values $\xi^{-1/2}$ and $\xi^{-1}+1$
are so tidy due to the possibility to hide roughnesses in the remainder term $r_n(Q,I)$.
The interested reader can track down the initial bounding values from Lemma~\ref{lm-delta} (Section~\ref{sbsbsec-proof-idiff}).

\medskip

{\sc Remark 1.}
The estimate \eqref{eq-rem-main} generally cannot be improved much.
Lemma~\ref{lm-empty-int} (Section~\ref{sec-aux} below) provides examples of intervals $I$ for which $r_n(Q,I)\asymp Q^{-1}$.
Regarding the asymptotic order of $r_n(Q,I)$, only two minor improvements are achievable in general settings.
Firstly, Lemma~\ref{lm-empty-int} shows that ${\Omega_n(Q,S) = 0}$ for any set $S\subset\mathbb{R}\setminus({-Q-1},{Q+1})$, and therefore, $r_n(Q,S) = 0$.
Secondly, for $n=2$ it can be proved \cite{Koleda2016} that in \eqref{eq-rem-main} we actually have
\begin{equation*}
r_2(Q,I)= - 2Q^{-1} \int\limits_{I\cap[-Q,Q]} \frac{dt}{\max(1,|t|)} + O\left(Q^{-1}\right).
\end{equation*}

\medskip

{\sc Remark 2.}
If $I\subset(-Q-1,-Q^{1/2})\cup(Q^{1/2},Q+1)$, then the equalities~\eqref{eq-main}, \eqref{eq-omega} and~\eqref{eq-rem-main} give
\begin{equation}\label{eq-main2}
\Omega_n(Q,I) = 2^{n-1} Q^{n-1} \left(|I| + O\!\left(\ln^{\ell(n)} Q\right)\right),
\end{equation}
where
the implicit big-O-constant depends only on~$n$.
The equation \eqref{eq-main2} shows that some relatively large subset of real algebraic integers statistically behaves just like the rational integers, which satisfy for any $I\subseteq(-Q-1,Q+1)$
\[
\Omega_1(Q,I) = |I| + O(1).
\]
The asymptotics \eqref{eq-main2} becomes nontrivial if the interval $I$ is large enough, namely, when
\[
\lim_{Q\to\infty} |I| (\ln Q)^{-\ell(n)} = \infty.
\]
Besides, these two ``uniform parts'' are unstable as $Q$ grows: they move away from the coordinate origin and spread wider, so that a fixed point can belong to a ``uniform part'' only for a finite range of $Q$.

One can show (see Lemma \ref{lm-Perron-n} below) that any algebraic integer $\alpha$ of degree $n$ and height at~most $Q$ satisfying $\alpha>(n+1)^{1/4} Q^{1/2}$ is a Perron number.
So, these two symmetric ``plateaux'' in the distribution of real algebraic integers are formed mainly from Perron numbers and their negatives.

\medskip

\begin{theorem}\label{thm-total}
Let $n\ge 2$.
Then, as $Q\to\infty$,
\begin{equation}\label{eq-total}
\frac{\Omega_n(Q,\mathbb{R})}{(2Q)^n} = 2^{-n}\int_{\mathbb{R}} \phi_{n-1}(t) dt +1-\frac4{3\sqrt{Q}} + r_n(Q,\mathbb{R}),
\end{equation}
where
\[
r_n(Q,\mathbb{R}) = \begin{cases}
O(Q^{-1}), & n\ge 3,\\
- \frac{\ln Q}{Q}  + \frac{2(1-\gamma)}{Q} + O\!\left(\frac1{Q\sqrt{Q}}\right), & n=2.
\end{cases}
\]
Here $\gamma = 0{,}5772\dots$ is Euler's constant; the big-O-constants depend only on $n$ (for $n=2$ the implicit constant is absolute).
\end{theorem}

It is worth to emphasize an interesting (and a bit surprising) feature of the distribution of real integers.
From \eqref{eq-omega} and \eqref{eq-analit-f2} one can easily see that for all $t$
\begin{equation}\label{eq-un-diff}
\left|\widetilde\omega_n(\xi, t) - \phi_{n-1}(t)\right| \le 2^{n-1} \xi,
\end{equation}
that is, the function $\omega_n(\xi,t)$ uniformly converges to $\phi_{n-1}(t)$ as $\xi$ tends to zero.
Therefore, if we take any finite fixed interval $I$, we get from \eqref{eq-un-diff}
\[
\lim_{\xi\to 0} \frac{\int_I \widetilde\omega_n(\xi,t) dt}{\int_I \phi_{n-1}(t) dt} = 1.
\]
Hence Theorem \ref{thm-main} shows that the real algebraic integers of degree $n$ statistically behave much like real algebraic numbers of degree $n-1$ as their heights grow (in this regard, it is interesting to compare the main results of \cite{Bere99} and \cite{Bugeaud2002}).
So, in view of the aforesaid, Theorem \ref{thm-main} could say that
the limit density function of real algebraic integers of degree $n$ is equal to the density function of real algebraic numbers of degree~${(n-1)}$.

But the surprise appears if we assume $I=\mathbb{R}$. Then from \eqref{eq-main} and \eqref{eq-total}
\[
\lim_{\xi\to 0} \frac{\int_{\mathbb{R}} \widetilde\omega_n(\xi,t) dt}{\int_{\mathbb{R}} \phi_{n-1}(t) dt} =
1+2^n\left(\int_{\mathbb{R}} \phi_{n-1}(t) dt\right)^{-1} > 1.
\]

Thus Theorems \ref{thm-main} and \ref{thm-total} yield the following corollary.

\begin{corollary}
For any fixed finite interval $I\subset\mathbb{R}$ the following limit equality is true
\begin{equation}\label{eq-ratio-f}
\lim_{Q\to\infty} \frac{\Omega_n(Q,I)}{\Phi_{n-1}(Q,I)} = 2\zeta(n).
\end{equation}
However, if $I$ is infinite or may depend on $Q$, the equality \eqref{eq-ratio-f} generally does not hold.
In particular,
\[
\lim_{Q\to\infty} \frac{\Omega_n(Q,\mathbb{R})}{\Phi_{n-1}(Q,\mathbb{R})}
= 2\zeta(n)\left(1+\frac{2^n}{\int_{\mathbb{R}} \phi_{n-1}(t)\,dt}\right).
\]
\end{corollary}

\subsection{Counting results for other heights}

For fair exposition, we should mention a number of counting results with respect to height functions other than the na\"{\i}ve height \eqref{eq-n-height-def}.
These results can be described by the following scheme.

Let $\mathbb{S}$ be the set of all algebraic elements of some sort (e.g. the algebraic numbers of a fixed degree, a number field, a ring of algebraic integers, a group of algebraic units, etc.). One defines a height function $H:\mathbb{S}\to \mathbb{R}$ such that
the value
\begin{equation}\label{eq-N-def}
N(\mathbb{S},X) := \#\left\{\alpha\in\mathbb{S}: H(\alpha)\le X \right\}
\end{equation}
is finite for all $X < +\infty$.

Then one may ask the question about the asymptotics of $N(\mathbb{S},X)$ as $X$ tends to infinity.
In most settings, known answers on the question look like
\begin{equation}\label{eq-gen-asymp}
N(\mathbb{S},X) = c\, X^k + O(X^{k-\gamma}),
\end{equation}
where the real parameters $c$, $k$, $\gamma$ and the big-O-constant depend only on the set $\mathbb{S}$.

As the function $H$, most papers on the subject employ the absolute Weil height, its generalizations or related functions.
In the simpliest setting, namely when one counts algebraic elements over $\mathbb{Q}$, the absolute Weil height $\mathcal{H}$ can be defined in terms of the Mahler measure as $\mathcal{H}(\alpha) = M(\alpha)^{1/n}$.

If $\alpha_1,\dots,\alpha_n$ are the roots of $p$, the \emph{Mahler measure} $M(p)$ of the polynomial can be defined as
\[
M(p) = |a_n| \prod_{i=1}^n \max(1,|\alpha_i|).
\]
For an algebraic number $\alpha$, its \emph{Mahler measure} $M(\alpha)$ is defined as the Mahler measure of the corresponding minimal polynomial.

Values $c$ in \eqref{eq-gen-asymp} have good-looking explicit expressions for several situations. See \cite{GriGu2017} for a nice account on the subject.
Some references and results can be found in the book by Lang \cite[chapter 3, \S 5]{Lang1983}.

For $\mathbb{S}$ being the set of algebraic numbers of degree $n$ over a fixed number field, and $H(\alpha)$ defined as the absolute Weil height of $\alpha$, such asymptotic formulas (with explicit constants $c$) are obtained by Masser and Vaaler \cite{MasVaa2008}, \cite{MasVaa2007}.
In 2001, Chern and Vaaler \cite[Theorem 6]{ChernVaaler2001} proved asymptotic estimates for the number of integer monic polynomials of degree $n$ having the Mahler measure bounded by $T$, which tends to infinity.
For the set $\mathcal{O}_n$ of algebraic integers of degree $n$ over $\mathbb{Q}$, and for $H$ being the Weil height,
from \cite{ChernVaaler2001} we have immediately
\[
N(\mathcal{O}_n, X) = c_n \,X^{n^2} + O\!\left(X^{n^2 - 1}\right),
\]
where $c_n$ is an explicit positive constant; in the big-O-notation the implicit constant depends only on $n$. Note that here $X$ has order of $Q^{1/n}$, where $Q$ is the upper bound for corresponding na\"{\i}ve heights.
In 2013, Barroero \cite{Barroero2013} extended this result to arbitrary ground number fields and improved the remainder term to $O\!\left(X^{n^2 - n}\right)$ for $\mathcal{O}_n$.

In 2016, Grizzard and Gunther \cite{GriGu2017} obtained an asymptotics like \eqref{eq-gen-asymp} for $\mathbb{S}$ being the set of all such algebraic numbers of degree $n$ over $\mathbb{Q}$ that their minimal polynomials all have the same specified leftmost and rightmost coefficients.
This approach gives a unified way to count algebraic numbers, integers and units over $\mathbb{Q}$.
In \cite{GriGu2017} one can also find explicit bounds on the error terms in the aforementioned results by Chern and Vaaler, Masser and Vaaler, and Barroero.

Widmer \cite{Wi2016} obtained a multiterm asymptotics of $N(\mathcal{O}_{\mathbb{K}}(k,m),X)$ for the set $\mathcal{O}_{\mathbb{K}}(k,m)$ of such $k$-tuples of algebraic integers that the coordinates of every point together generate a number field of a fixed degree $m$ over a given finite extension $\mathbb{K}$ of $\mathbb{Q}$.

In \cite{CaHu2017}, Calegari and Huang calculated the asymptotic number \eqref{eq-N-def} of algebraic integers~$\alpha$ (including Perron numbers, totally real and totally complex algebraic integers) of a fixed degree with the height function $H$ defined as the maximum absolute value of the roots of the minimal polynomial of $\alpha$. For a Perron number $\alpha$, this height function is merely the absolute value $|\alpha|$.

\subsection{Outline of the paper}

Now we give a short outline of the paper.
Section~\ref{sec-aux} contains neccessary auxiliary statements, and in the first reading one can skip it.
In Section~\ref{sec-mainproof} we prove a counterpart of Theorem~\ref{thm-algnum} for algebraic integers forming a ground for deriving Theorem~\ref{thm-main}.
Section~\ref{sec-limit} is devoted to the proof of Theorems~\ref{thm-main} and \ref{thm-total}.
In~Subsection~\ref{sec-2deg}, for the reader's convenience, we recall relevant facts about quadratic algebraic integers from \cite{Koleda2016}.
In~Subsection~\ref{sec-ndeg}, we treat algebraic integers of degree $n\ge 3$.
This separation is caused by inapplicability of the proof for $n\ge 3$ to $n=2$.
However, in their final form, the general results cover all degrees $n\ge 2$.

\section{Preliminary lemmas}\label{sec-aux}

\begin{lemma}[\cite{Chela1963}]\label{lm-red-m-pol}
Let $\mathcal{R}_n(Q)$ denote the number of reducible integer monic polynomials of degree $n$ and height at most $Q$. Then
\[
\lim\limits_{Q\to \infty} \frac{\mathcal{R}_n(Q)}{Q^{n-1}} = \upsilon_n, \qquad
\lim\limits_{Q\to \infty} \frac{\mathcal{R}_2(Q)}{2Q \ln Q} = 1,
\]
where $\upsilon_n$ is an effective positive constant depending on $n$ only, $n \ge 3$.
\end{lemma}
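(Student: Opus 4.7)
The dominant contribution to $\mathcal R_n(Q)$ comes from reducible polynomials possessing at least one rational root. Writing $p(x)=(x-m)s(x)$ with $m\in\mathbb Z$ and $s(x)=\sum_{i=0}^{n-1}b_i x^i$ monic of degree $n-1$, and expanding $p(x)=x^n+\sum_{i=0}^{n-1}a_ix^i$, one obtains $a_i=b_{i-1}-m b_i$ (with the convention $b_{-1}=0$, $b_{n-1}=1$). The corresponding change of variables $(b_0,\dots,b_{n-2})\mapsto(a_1,\dots,a_{n-1})$ is triangular with unit determinant, while $a_0$ is forced by $p(m)=0$ to equal $-m^n-\sum_{j=1}^{n-1}m^j a_j$. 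My plan is to count integer points in the polytope
\[
\Pi_m(Q)=\Bigl\{(a_1,\dots,a_{n-1})\in[-Q,Q]^{n-1}:\Bigl|m^n+\sum_{j=1}^{n-1}m^j a_j\Bigr|\le Q\Bigr\},
\]
sum over $m\in\mathbb Z$, and then subtract the overcounting of polynomials with several rational roots by inclusion--exclusion.

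A slicing argument in the $a_1$-direction shows that, for $|m|\ge 1$, each $a_1$-slice has length $\le 2Q/|m|$; iterating the slicing in $a_2,\dots,a_{n-1}$ yields $\mathrm{vol}\,\Pi_m(Q)\asymp Q^{n-1}\max(1,|m|)^{-(n-1)}$, while the divisibility $m\mid a_0$ forces $|m|\le Q$. Summing,
\[
\sum_{m\in\mathbb Z}\mathrm{vol}\,\Pi_m(Q)\ \asymp\ Q^{n-1}\sum_{|m|\le Q}\max(1,|m|)^{-(n-1)},
\]
which converges absolutely to a positive constant for $n\ge 3$, producing the main term $\upsilon_n Q^{n-1}$; for $n=2$ the sum is harmonic and contributes the extra factor $\log Q$. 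The lattice-point/volume discrepancy is bounded by the surface area $\ll Q^{n-2}\max(1,|m|)^{-(n-2)}$, whose sum over $m$ is $O(Q^{n-2}\log Q)$, a lower-order error.

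The case $n=2$ requires direct treatment, because every reducible monic quadratic already has two integer roots, so the above procedure double-counts. I would instead count unordered pairs $(m_1,m_2)\in\mathbb Z^2$ subject to $|m_1+m_2|\le Q$ and $|m_1 m_2|\le Q$; the divisor-style estimate $\sum_{m\ne 0}\lfloor Q/|m|\rfloor\sim 2Q\log Q$, summed over the four sign patterns and halved for unordering, recovers the asymptotic $\mathcal R_2(Q)\sim 2Q\log Q$.

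The last step, relevant only when $n\ge 4$, bounds the contribution of reducible $p$ with no rational root, where $p=qr$ with $\deg q,\deg r\ge 2$. Using multiplicativity of the Mahler measure, $M(q)M(r)=M(p)\ll_n H(p)\le Q$, together with the Chern--Vaaler asymptotic $\#\{q\text{ monic integer of degree }k: M(q)\le T\}\asymp T^k$, I would bound this contribution by
\[
\sum_{k=2}^{\lfloor n/2\rfloor}\sum_{q:\deg q=k}\bigl(cQ/M(q)\bigr)^{n-k}\ \ll\ Q^{n-2}\log Q,
\]
with the logarithm appearing only when $2k=n$. This is $o(Q^{n-1})$, so it does not perturb the main term. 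The principal technical obstacle is obtaining the volume and surface-area estimates for $\Pi_m(Q)$ with sufficient uniformity in $m$ to collapse all errors into $o(Q^{n-1})$ for $n\ge 3$, together with the careful bookkeeping of the inclusion--exclusion correcting for multiple rational roots.
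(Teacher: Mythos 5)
The paper does not prove this lemma; it is taken verbatim from Chela's 1963 paper on reducible polynomials, so there is no internal proof to compare against. That said, your proposed proof follows what is, as far as I know, essentially Chela's own strategy, and the main ideas are correct: for $n\ge 3$ the count is dominated by polynomials with an integer root (rational = integer here by the rational root theorem), one sums over the root $m$, and the resulting series $\sum_m |m|^{-(n-1)}$ converges precisely when $n\ge 3$, while for $n=2$ it diverges logarithmically and must be handled directly via the divisor function, giving the $2Q\ln Q$ growth. The contribution of reducible $p$ with no linear factor (relevant only for $n\ge 4$) being $o(Q^{n-1})$ via multiplicativity of the Mahler measure together with $M(p)\asymp_n H(p)$ is also a sound way to close the argument.

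Two points deserve more care than you give them. First, your one-line ``iterating the slicing'' for $\operatorname{vol}\Pi_m(Q)\asymp Q^{n-1}\max(1,|m|)^{-(n-1)}$ is not obviously right in the $a$-coordinates, since there you have only one linear constraint involving $m$; it does work, because the nonemptiness condition that survives each slice has slack growing by a factor $|m|$, but that needs to be said, and it is much cleaner in the $b$-coordinates, where the constraints $|m b_0|\le Q$ and $|b_{i-1}-mb_i|\le Q$ directly confine each $b_i$ to an interval of length $\asymp Q/|m|$. Second, the lemma asserts that a \emph{limit} exists, not merely $\mathcal R_n(Q)\asymp Q^{n-1}$; to upgrade $\asymp$ to a limit you should observe that after rescaling $b_i=Qc_i$ the region becomes $Q$ times a polytope $P_m$ whose volume is independent of $Q$ up to an $o(1)$ perturbation (coming from the single constraint $|c_{n-2}-m/Q|\le 1$), and then invoke dominated convergence over $m$ using $\operatorname{vol}P_m\ll|m|^{-(n-1)}$; the same dominated-convergence step lets you pass the inclusion--exclusion correction (itself $O(Q^{n-2}\log^{O(1)}Q)$) and the Davenport-type lattice discrepancy $\ll (Q/\max(1,|m|))^{n-2}$ into the error. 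With those two points filled in, the argument is complete.
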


\begin{lemma}[\cite{Dav51_LP}]\label{lm-int-p-num}
Let $\mathcal{D}\subset \mathbb{R}^d$ be a bounded region formed by points $(x_1,\dots,x_d)$ satisfying a finite collection of algebraic inequalities
\[
F_i(x_1,\dots,x_d)\ge 0, \qquad 1\le i\le k,
\]
where $F_i$ is a polynomial of degree $\deg F_i \le m$ with real coefficients.
Let
\[
\Lambda(\mathcal{D}) = \mathcal{D}\cap \mathbb{Z}^d.
\]
Then
\[
\left|\#\Lambda(\mathcal{D}) - \mes_d \mathcal{D}\right| \le C \max(\bar{V}, 1),
\]
where the constant $C$ depends only on $d$, $k$, $m$; the quantity $\bar{V}$ is the maximum of all $r$--dimensional measures of projections of $\mathcal{D}$ onto all the~coordinate subspaces obtained by making $d-r$ coordinates of points in $\mathcal{D}$ equal to zero, $r$ taking all values from $1$ to $d-1$, that is,
\[
\bar{V}(\mathcal{D}) := \max\limits_{1\le r < d}\left\{ \bar{V}_r(\mathcal{D}) \right\}, \quad
\bar{V}_r(\mathcal{D}) := \max\limits_{\substack{\mathcal{J}\subset\{1,\dots,d\} \\ \#\mathcal{J} = r}}\left\{ \mes_r \operatorname{Proj}_{\mathcal{J}} \mathcal{D} \right\},
\]
where $\operatorname{Proj}_{\mathcal{J}} \mathcal{D}$ is the orthogonal projection of $\mathcal{D}$ onto the coordinate subspace formed by coordinates with indices in $\mathcal{J}$.
\end{lemma}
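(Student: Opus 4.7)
The plan is to proceed by induction on the dimension $d$, following the classical slicing strategy. For $d=1$, the region $\mathcal{D}$ is cut out by at most $k$ polynomial inequalities of degree at most $m$ in a single variable, so it is a union of at most $km+1$ intervals. On each such interval the number of integer points differs from the length by at most one, giving $|\#\Lambda(\mathcal{D})-\mes_1\mathcal{D}|\le km+1$, which settles the base case (where $\max(\bar V,1)=1$ by convention, since the set of admissible $r$ is empty).

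For the inductive step, I would slice along the last coordinate: for each $t\in\mathbb{R}$, set $\mathcal{D}_t := \{(x_1,\dots,x_{d-1}) : (x_1,\dots,x_{d-1},t)\in\mathcal{D}\}$, a bounded region in $\mathbb{R}^{d-1}$ defined by the same $k$ algebraic inequalities of degree at most $m$, with $t$ treated as a parameter. Applying the inductive hypothesis uniformly in $t$ yields
\[
\left|\#(\mathcal{D}_t\cap\mathbb{Z}^{d-1}) - \mes_{d-1}\mathcal{D}_t\right|\le C'\max(\bar V(\mathcal{D}_t),1),
\]
and each $r$-dimensional projection of $\mathcal{D}_t$ sits inside the $t$-slice of the corresponding $(r+1)$-dimensional projection of $\mathcal{D}$, so $\bar V(\mathcal{D}_t)\le \bar V(\mathcal{D})$ pointwise in $t$. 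Summing over the integer values of $t$ lying in the projection $\operatorname{Proj}_{\{d\}}\mathcal{D}$ (a set of cardinality at most $\bar V_1(\mathcal{D})+1$) then gives
\[
\#\Lambda(\mathcal{D}) = \sum_{t\in\mathbb{Z}}\#(\mathcal{D}_t\cap\mathbb{Z}^{d-1}) = \sum_{t\in\mathbb{Z}}\mes_{d-1}\mathcal{D}_t + O\!\left(\max(\bar V(\mathcal{D}),1)\right).
\]

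It remains to compare the integer sum with the integral $\mes_d\mathcal{D}=\int \mes_{d-1}\mathcal{D}_t\,dt$, and this is where I expect the main obstacle to lie. The key ingredient is a uniform bound on the total variation of the function $t\mapsto\mes_{d-1}\mathcal{D}_t$: this function is piecewise algebraic with only $O_{d,k,m}(1)$ intervals of monotonicity, because the combinatorial type of $\mathcal{D}_t$ can change only at the finitely many $t$ where a discriminant or subresultant of the defining polynomials vanishes, and on each such piece the value is bounded above by $\bar V_{d-1}(\mathcal{D})$. Consequently the Riemann-sum error between $\sum_{t\in\mathbb{Z}}\mes_{d-1}\mathcal{D}_t$ and $\int \mes_{d-1}\mathcal{D}_t\,dt$ is $O(\bar V_{d-1}(\mathcal{D}))$, and combining with the previous display yields the claimed estimate $|\#\Lambda(\mathcal{D})-\mes_d\mathcal{D}|\le C\max(\bar V,1)$ with $C=C(d,k,m)$. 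The genuinely semialgebraic input — the uniform monotone-piece count, which in turn rests on Bezout-type bounds for the defining polynomials — is the technical heart of the argument; everything else is bookkeeping within the induction.
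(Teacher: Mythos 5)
First, a point of reference: the paper does not prove this lemma at all --- it is quoted from Davenport's 1951 paper \emph{On a principle of Lipschitz} (the remark after it even says more modern counting results exist but are not needed). So your attempt has to be measured against Davenport's classical argument, and there it runs into two genuine gaps. The first is the bookkeeping after you apply the inductive hypothesis to the slices $\mathcal{D}_t$: you bound the accumulated error by (number of integers $t$ with $\mathcal{D}_t\neq\varnothing$) $\times\, C'\max(\bar V(\mathcal{D}),1)$, i.e.\ by roughly $C'(\bar V_1(\mathcal{D})+1)\max(\bar V(\mathcal{D}),1)$, and then assert this is $O(\max(\bar V(\mathcal{D}),1))$. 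That is a product of two projection quantities, not a single one, and it is not admissible: for $\mathcal{D}=\{0\le y\le 1,\ 0\le x,t\le N,\ xt\le N\}$ one has $\max(\bar V,1)\asymp N\ln N$, while your bound at this step is of order $N\cdot N=N^2$. The root cause is that the inductive statement in the ``max of projections'' form is too weak to be summed over slices; Davenport's induction carries the stronger, linear-in-projections estimate $|\#\Lambda(\mathcal{D})-\mes_d\mathcal{D}|\le\sum_{r=0}^{d-1}h^{d-r}V_r(\mathcal{D})$, where $V_r$ is the \emph{sum} of all $r$-dimensional coordinate projections, $V_0=1$, and $h$ bounds the number of intervals cut out on any line parallel to a coordinate axis; in that form the slice contributions really do add up, because $\sum_{t\in\mathbb{Z}}\mes_r\operatorname{Proj}_{\mathcal{J}}(\mathcal{D}_t)$ is again a lattice sum of slice measures of the $(r+1)$-dimensional projection $\operatorname{Proj}_{\mathcal{J}\cup\{d\}}\mathcal{D}$.

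The second gap is the step you yourself flag as the heart of the matter: comparing $\sum_{t\in\mathbb{Z}}\mes_{d-1}\mathcal{D}_t$ with $\int\mes_{d-1}\mathcal{D}_t\,dt$ via a claim that $t\mapsto\mes_{d-1}\mathcal{D}_t$ has $O_{d,k,m}(1)$ monotone pieces ``because the combinatorial type changes only at discriminant zeros''. Constancy of the combinatorial type does not control the oscillation of the slice volume inside a chamber: that function is in general not even semialgebraic, and bounding the number of its critical points uniformly in $d,k,m$ is a genuinely deep issue (zeros of abelian-integral-type functions), not Bezout bookkeeping. Davenport's proof never needs such a bound, because he arranges the slicing the other way: he writes $\#\Lambda(\mathcal{D})=\sum_{x'\in\mathbb{Z}^{d-1}}\#\{t\in\mathbb{Z}:(x',t)\in\mathcal{D}\}$, replaces each inner count by $\mes_1$ of the fibre with error at most $h$ per lattice point of $\operatorname{Proj}_{\{1,\dots,d-1\}}\mathcal{D}$ (handled by induction applied to that projection), and then uses the exchange $\sum_{x'\in\mathbb{Z}^{d-1}}\mes_1\{t:(x',t)\in\mathcal{D}\}=\int_{\mathbb{R}}\#(\mathcal{D}_t\cap\mathbb{Z}^{d-1})\,dt$, so the inductive estimate is applied under an integral and no variation bound on $t\mapsto\mes_{d-1}\mathcal{D}_t$ is ever required. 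Your base case $d=1$ is fine; to make the induction work you would need to both strengthen the inductive statement as above and restructure the slicing along these lines.
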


Remark. There are several more general and more recent results on counting integer points in regions (see e.g. \cite{BaWi2014} and references there). However, for our purposes, this Davenport's lemma is enough.

\begin{lemma}\label{lm-jacob}
Let $n\ge 2$. Let $\xi$ be a fixed positive real number. Let vectors $(a_{n-1}, \dots, a_1, a_0)$ and $(b_{n-2},\dots, b_1, b_0, \alpha, \beta)$ be related by the equality
\begin{equation}\label{eq-p-roots}
\xi x^n + \sum\limits_{k=0}^{n-1} a_k x^k =
(x-\alpha)(x-\beta)\left(\xi x^{n-2}+ \sum\limits_{m=0}^{n-3} b_m x^m \right).
\end{equation}
Then this relation can be expressed in the following matrix form:
\begin{equation}\label{eq_zamena2_matr}
\left(\begin{array}{l}
\xi\\
a_{n-1}\\
a_{n-2}\\
\vdots\\
a_1\\
a_0
\end{array}\right) =
\left(\begin{array}{cccc}
1 & & \phantom{\ddots} & \bigzero\\
-(\alpha+\beta) & 1 & \phantom{\ddots} &\\
\alpha\beta & -(\alpha+\beta) & \ddots &\\
 & \alpha\beta & \phantom{\ddots} & 1\\
 & & \ddots & -(\alpha+\beta) \\
\bigzero & & \phantom{\ddots} & \alpha\beta
\end{array}\right) \cdot
\left(\begin{array}{l}
\xi \\
b_{n-3}\\
\vdots\\
b_1\\
b_0
\end{array}\right),
\end{equation}
and the Jacobian of this coordinate change is equal to
\begin{equation*}
\det J = \left| \frac{\partial (a_{n-1},\ldots,a_2,a_1,a_0)}{\partial (b_{n-3},\ldots,b_0;\alpha,\beta)} \right| =
(\beta-\alpha)\cdot g(\mathbf{b}, \alpha) \cdot g(\mathbf{b}, \beta),
\end{equation*}
where $g(\mathbf{b}, x) := \xi x^{n-2} + b_{n-3} x^{n-3} + \ldots + b_1 x + b_0$.
\end{lemma}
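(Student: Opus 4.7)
My plan is to first verify the matrix form \eqref{eq_zamena2_matr} by direct expansion of the right-hand side of \eqref{eq-p-roots}. Multiplying
\[
\bigl(x^2 - (\alpha+\beta)x + \alpha\beta\bigr)\cdot\bigl(\xi x^{n-2} + b_{n-3}\,x^{n-3} + \ldots + b_0\bigr)
\]
and collecting coefficients of $x^k$ yields the recurrence
\[
a_k = b_{k-2} - (\alpha+\beta)\,b_{k-1} + \alpha\beta\,b_k,
\]
with the convention $b_{n-2}=\xi$ and $b_j = 0$ for $j\notin\{0,\ldots,n-2\}$. Read entry by entry, this is precisely the tridiagonal Toeplitz-like matrix in \eqref{eq_zamena2_matr}.

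For the Jacobian, I would differentiate the identity $f(x) = (x-\alpha)(x-\beta)\,g(\mathbf{b},x)$ with respect to each of the $n$ independent variables, noting that $g$ depends only on the $b_m$'s:
\[
\frac{\partial f}{\partial b_m} = (x-\alpha)(x-\beta)\,x^m, \quad \frac{\partial f}{\partial \alpha} = -(x-\beta)\,g(\mathbf{b},x), \quad \frac{\partial f}{\partial \beta} = -(x-\alpha)\,g(\mathbf{b},x).
\]
Identifying each polynomial of degree at most $n-1$ with its coefficient vector in the monomial basis $1,x,\ldots,x^{n-1}$, these $n$ polynomials form the rows of the $n\times n$ Jacobian matrix $J$; its $k$-th column collects exactly the partial derivatives of $a_k$.

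The decisive step is a row transformation on the last two rows, based on the identities
\[
\frac{1}{\alpha-\beta}\left(\frac{\partial f}{\partial\beta} - \frac{\partial f}{\partial\alpha}\right) = g(\mathbf{b},x), \qquad \frac{1}{\alpha-\beta}\left(\beta\,\frac{\partial f}{\partial\beta} - \alpha\,\frac{\partial f}{\partial\alpha}\right) = x\,g(\mathbf{b},x),
\]
both of which follow by one-line expansion. The governing $2\times 2$ transformation has determinant $(\alpha-\beta)^{-1}$, so replacing the last two rows of $J$ by these combinations yields $\det J = (\alpha-\beta)\,\det J'$, where $J'$ has rows equal to the coefficient vectors of $(x-\alpha)(x-\beta)\,x^m$ for $m=0,\ldots,n-3$, together with those of $g(\mathbf{b},x)$ and $x\,g(\mathbf{b},x)$.

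Finally, $J'$ is (up to a row permutation) the Sylvester matrix of the monic quadratic $(x-\alpha)(x-\beta)$ and of $g(\mathbf{b},x)$ of degree $n-2$, so by the classical resultant-as-product-over-roots formula
\[
\det J' = \pm\,\mathrm{Res}\!\bigl((x-\alpha)(x-\beta),\, g(\mathbf{b},\cdot)\bigr) = \pm\, g(\mathbf{b},\alpha)\,g(\mathbf{b},\beta).
\]
Combining with the factor $(\alpha-\beta)$ gives the claimed product up to a sign, which can be pinned down by a small test case such as $n=2$ (where $g\equiv\xi$ and a direct computation gives $\xi^2(\beta-\alpha)$). The main technical nuisance is bookkeeping the signs arising from the row permutation and from the standard sign convention for the Sylvester matrix; the conceptual heart of the argument is the row-combination trick that converts the $\alpha$- and $\beta$-derivative rows into the $g, xg$ rows required for a Sylvester determinant.
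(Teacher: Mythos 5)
Your proposal is correct and follows essentially the same route as the paper: the paper works with the columns $\partial\mathbf{a}/\partial b_j$, $\partial\mathbf{a}/\partial\alpha$, $\partial\mathbf{a}/\partial\beta$ (the transpose of your row picture), applies the same two-dimensional combination trick to turn the $\alpha$- and $\beta$-derivative vectors into the coefficient vectors of $g$ and $xg$, and identifies the resulting determinant with the resultant of $(x-\alpha)(x-\beta)$ and $g$, equal to $g(\mathbf{b},\alpha)g(\mathbf{b},\beta)$. The only difference is bookkeeping of the sign: the paper chooses the $2\times2$ transformation to have determinant exactly $1$ and factors $(\beta-\alpha)$ out of one explicit column, so the sign comes out directly, whereas your $n=2$ test case alone would not settle the permutation sign for every $n$ (though for the intended application only $|\det J|$ matters).
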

Note that for $n=2$ the Jacobian equals to $\xi^2(\beta-\alpha)$.

\begin{proof}
Note that in \eqref{eq_zamena2_matr} the left hand side vector has $n+1$ coordinates, the right hand side vector has $n-1$ coordinates, and the dimension of the matrix is $(n+1)\times(n-1)$.
We just discard the first row of the l.h.s. vector and of the matrix from our consideration because $\xi$ is a constant in the settings of the lemma.

It is not hard to check that both \eqref{eq-p-roots} and \eqref{eq_zamena2_matr} give the same expressions for $a_i$.
To~calculate the Jacobian, we need to know the derivatives $\frac{\partial a_i}{\partial b_j}$, $\frac{\partial a_i}{\partial \alpha}$ and $\frac{\partial a_i}{\partial \beta}$.
To find them, we will differentiate the equality \eqref{eq_zamena2_matr} w.r.t. the corresponding variables.

For simplicity denote $\mathbf{a}=(a_{n-1},\dots,a_1,a_0)^T \in\mathbb{R}^n$. Then the Jacobian matrix takes the form
\[
J = \left(\frac{\partial \mathbf{a}}{\partial b_{n-3}}, \dots, \frac{\partial \mathbf{a}}{\partial b_0}, \frac{\partial \mathbf{a}}{\partial \alpha}, \frac{\partial \mathbf{a}}{\partial \beta} \right).
\]

For $b_j$ (where $0\le j\le n-3$), we have
\begin{equation}\label{eq-dbj}
\frac{\partial \mathbf{a}}{\partial b_j} = \left(\underbrace{0,\dots,0}_{n-3-j}, 1, -(\alpha+\beta), \alpha\beta, \underbrace{0,\dots,0}_j \right)^T.
\end{equation}

For $\alpha$ and $\beta$ after differentiation we obtain
\begin{align*}
\frac{\partial \mathbf{a}}{\partial \alpha}
=& - (\xi, b_{n-3},\dots, b_1,b_0, 0)^T + \beta (0, \xi, b_{n-3},\dots, b_1,b_0)^T,\\
\frac{\partial \mathbf{a}}{\partial \beta}
=& - (\xi, b_{n-3},\dots, b_1,b_0, 0)^T + \alpha (0, \xi, b_{n-3},\dots, b_1,b_0)^T.
\end{align*}

The value of the Jacobian will not change, if we replace the columns $\frac{\partial \mathbf{a}}{\partial \alpha}$ and $\frac{\partial \mathbf{a}}{\partial \beta}$ by the vectors
\begin{align}
\frac{\alpha}{\beta-\alpha}
\frac{\partial \mathbf{a}}{\partial \alpha} -
\frac{\beta}{\beta-\alpha}
\frac{\partial \mathbf{a}}{\partial \beta} &= (\xi, b_{n-3},\dots, b_1,b_0, 0)^T,  \label{eq-dal-dbe-1}\\
\frac{\partial \mathbf{a}}{\partial \alpha} - \frac{\partial \mathbf{a}}{\partial \beta} &= (\beta-\alpha)(0, \xi, b_{n-3},\dots, b_1,b_0)^T. \label{eq-dal-dbe-2}
\end{align}
Let us briefly explain this move.
This replacement is equivalent to right multiplication the Jacobian matrix $J$ by the block diagonal matrix $B$
\[
B = \diag(E_{n-2}, M), \qquad
M=\left(\begin{matrix}
\frac{\alpha}{\beta-\alpha} & 1\\
-\frac{\beta}{\beta-\alpha} & -1
\end{matrix}\right),
\]
where $E_{n-2}$ is the identity matrix of size $n-2$. It is easy to see that $\det(B)=1$.

Arranging all the vectors \eqref{eq-dbj}, \eqref{eq-dal-dbe-1}, \eqref{eq-dal-dbe-2} in one $(n\times n)$-determinant we get the following formula for the Jacobian.
\begin{equation*}
\det J = \det JB =
(\beta-\alpha) \cdot \left|
\begin{array}{cccccc}
1 & & \phantom{\ddots} & & b_{n-2} & \\
-(\alpha+\beta) & 1 & \phantom{\ddots} & & b_{n-3} & b_{n-2}\\
\alpha\beta & -(\alpha+\beta) & \ddots & & \vdots & b_{n-3}\\
 & \alpha\beta & \ddots & 1 & b_1 & \vdots\\
 & & \ddots & -(\alpha+\beta) & b_0 & b_1\\
 & & \phantom{\ddots} & \alpha\beta & & b_0
\end{array}
\right|.
\end{equation*}
One can observe that the determinant in the right-hand side is equal to the resultant $R(f,g)$ of the polynomials $f(x)=(x-\alpha)(x-\beta)$ and $g(x)= \xi x^{n-2} + b_{n-3} x^{n-3} +\ldots+b_1 x + b_0$.
This proves the lemma since $R(f,g) = g(\alpha)g(\beta)$ (see, e.g. \cite[\S 5.9]{Waer1970-1-en}).
\end{proof}

\begin{lemma}\label{lm-2roots}
Let $I = [a,b)\subset \mathbb{R}$ be a finite interval, $|I|\le 1$, and let $0<\xi \le 1$.
Let $\mathcal{M}_n(\xi, I)$ be the set of polynomials $p\in\mathbb{R}[x]$ with height $H(p) \le 1$ and $\deg(p(x) - \xi x^n) < n$ that have at least $2$ roots in~$I$.
Then
\begin{equation*}
\mes_{n} \mathcal{M}_n(\xi, I) \le \lambda(n) \left(\xi + \rho^{-3}\right)^2 |I|^3,
\end{equation*}
where $\rho = \max(1, |a+b|/2)$, and $\lambda(n)$ is a constant depending only on $n$.
\end{lemma}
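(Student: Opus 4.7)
The plan is to apply the change of variables from Lemma \ref{lm-jacob}, parametrizing each $p \in \mathcal{M}_n(\xi, I)$ by an ordered pair of two of its roots $\alpha, \beta \in I$ together with the coefficients $\mathbf{b} = (b_0, \ldots, b_{n-3})$ of the quotient $g(x)$ in $p(x) = (x-\alpha)(x-\beta) g(x)$. Each polynomial with $k \ge 2$ roots in $I$ is counted $k(k-1) \ge 2$ times, so
\[
\mes_n \mathcal{M}_n(\xi, I) \le \frac{1}{2}\int_I \int_I |\beta - \alpha|\, V(\alpha, \beta)\, d\alpha\, d\beta, \qquad V(\alpha, \beta) := \int\limits_{\mathbf{b}:\, H(p)\le 1} |g(\alpha) g(\beta)|\, d\mathbf{b}.
\]
Since $\int_I \int_I |\beta-\alpha|\,d\alpha\,d\beta = |I|^3/3$, the problem reduces to the uniform bound $V(\alpha, \beta) \le C_n (\xi + \rho^{-3})^2$ for all $\alpha, \beta \in I$.

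To bound $V(\alpha, \beta)$, I would use the recursion $a_k = b_{k-2} - (\alpha+\beta) b_{k-1} + \alpha\beta b_k$ (with $b_{n-2} = \xi$ and boundary entries zero) implicit in Lemma \ref{lm-jacob}, and split cases on $\rho$. When $\rho > 1$, the condition $|I| \le 1$ forces $I$ away from the origin and $|\alpha\beta| \ge \rho^2/4$, so solving the recursion forward, dividing by $\alpha\beta$ at each step, one inductively obtains $|b_k| \le C_n/\rho^2$ from $|a_k|\le 1$. A direct estimate then yields $|g(t)| \le C_n \rho^{n-2}(\xi + \rho^{-3})$ for $t\in\{\alpha,\beta\}$, while the Jacobian of the forward parametrization $(a_0,\ldots,a_{n-3}) \leftrightarrow \mathbf{b}$ equals $(\alpha\beta)^{-(n-2)}$, so the $\mathbf{b}$-region has measure $\le 2^{n-2}/|\alpha\beta|^{n-2} \le C_n \rho^{-2(n-2)}$. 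Multiplying these, the $\rho^{\pm 2(n-2)}$ factors cancel and $V(\alpha, \beta) \le C_n (\xi + \rho^{-3})^2$ emerges. When $\rho = 1$, $I$ may approach $0$ and the forward recursion breaks down; I would instead solve backward from $b_{n-2} = \xi$ and $b_{n-3} = a_{n-1} + (\alpha+\beta)\xi$, and since $|\alpha|, |\beta|\le 3/2$ and $\xi, |a_i|\le 1$, this produces a uniform bound $|b_k|\le D_n$, hence $V(\alpha,\beta) \le D'_n$; the required inequality follows from $(\xi + \rho^{-3})^2 = (\xi + 1)^2 \ge 1$.

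The hard part will be choosing the correct parametrization for each case: the forward recursion is sharp only when $|\alpha\beta|$ is bounded away from zero (exactly Case $\rho > 1$), while the backward one is robust but too coarse for large $\rho$. In Case $\rho > 1$, the argument will upper bound the true $\mathbf{b}$-polytope (cut by all $n$ constraints) by the larger region defined by only the $n-2$ constraints $|a_0|, \ldots, |a_{n-3}|\le 1$, and it is essential that the $\rho^{n-5}$ contribution in $\sum |b_k||t|^k$ delivers precisely the $\rho^{-3}$ factor of the statement.
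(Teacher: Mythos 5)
Your proposal is correct and follows essentially the same route as the paper: the change of variables of Lemma \ref{lm-jacob}, the reduction to a uniform bound on the inner integral over $\mathbf{b}$, and the two cases $\rho>1$ (triangular system in $c_0=\alpha\beta$, giving $|b_k|\ll\rho^{-2}$, $\mathbf{b}$-volume $\ll|\alpha\beta|^{-(n-2)}$, and $|g|\ll\rho^{n-2}(\xi+\rho^{-3})$, with the $\rho$-powers cancelling) and $\rho=1$ (unit-determinant triangular system giving $|b_k|\ll_n 1$ and absorbing the bound into $(\xi+\rho^{-3})^2\ge 1$). The only differences are cosmetic bookkeeping (the factor $\tfrac12$ from ordered root pairs and the explicit $|I|^3/3$), which the paper handles by the cruder but equally valid multiplicity inequality.
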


\begin{proof}
To simplify notation, we use $\mathcal{M} := \mathcal{M}_n(\xi, I)$.
Estimate from above the measure
\begin{equation*}
\mes_n \mathcal{M} = \int\limits_{\mathcal{M}} d{\bf a}.
\end{equation*}
Every polynomial $p(x)$ in $\mathcal{M}$ can be expressed in the form
\[
p(x) = \xi x^n + a_{n-1} x^{n-1} + \ldots + a_0 =
(x-\alpha)(x-\beta)(\xi x^{n-2}+ b_{n-3} x^{n-3}+\ldots + b_0),
\]
where $\alpha, \beta \in I$.

Change the coordinates by \eqref{eq_zamena2_matr}.
The condinition ${\bf a} \in \mathcal{M}$ is equivalent to the system of inequalities
\begin{equation}\label{eq_grM2}
\left\{
\begin{array}{l}
|a_{n-1}| = |b_{n-3} - (\alpha+\beta) \xi| \le 1,\\
|a_{n-2}| = |b_{n-4}-(\alpha+\beta)b_{n-3}+\alpha\beta \xi| \le 1,\\
|a_k| = |b_{k-2} - (\alpha+\beta) b_{k-1} + \alpha\beta b_k| \le 1, \ \ \ k = 2,\ldots,n-3,\\
|a_1| = |- (\alpha+\beta) b_0 + \alpha\beta b_1| \le 1,\\
|a_0| = |\alpha b_0| \le 1,\\
a\le \alpha < b, \\
a\le \beta < b.
\end{array}
\right.
\end{equation}

From Lemma \ref{lm-jacob}, we obtain
\begin{equation}\label{eq-M-ineq}
\mes_n \mathcal{M} \le \int\limits_{\mathcal{M}^*} |\alpha-\beta| \cdot |g(\mathbf{b}, \alpha)\; g(\mathbf{b},\beta)|\, d\mathbf{b}\, d\alpha\, d\beta,
\end{equation}
where $\mathcal{M}^*$ is the new integration domain defined by the inequlities \eqref{eq_grM2}, here $g(\mathbf{b},x) = \xi x^{n-2}+\ldots+b_1 x + b_0$. Note that here we have inequality instead of equality. The reason is that a polynomial having $k>2$ roots in $I$ can be written in $\binom{k}{2}$ different ways in the form~\eqref{eq-p-roots}.

Write the multiple integral \eqref{eq-M-ineq} in the following manner:
\[
\mes_n \mathcal{M} \le \int\limits_{I\times I} |\alpha-\beta|\, d\alpha\, d\beta \int\limits_{\mathcal{M}^*(\alpha,\beta)} |g(\mathbf{b},\alpha) g(\mathbf{b},\beta)|\,d\mathbf{b},
\]
where $\mathcal{M}^*(\alpha,\beta)$ is the set of vectors $\mathbf{b} \in \mathbb{R}^{n-1}$ that satisfy the inequalities~\eqref{eq_grM2}.

Estimate the internal integral using upper bounds on $\mes_{n-2} \mathcal{M}^*(\alpha,\beta)$ and on the function $G({\bf b},\alpha,\beta):= g(\mathbf{b},\alpha)\; g(\mathbf{b},\beta)$ for $\mathbf{b} \in \mathcal{M}^*(\alpha,\beta)$. Consider the two cases.

1) Let $|a+b|/2 \le 1$.

Estimate the measure $\mes_{n-2} \mathcal{M}^*(\alpha,\beta)$ using the submatrix equation from~\eqref{eq_zamena2_matr}:
\begin{equation}\label{eq-matr1}
\left(\begin{array}{l}
a_{n-1} - c_1\xi\\
a_{n-2} - c_0\xi\\
\vdots\\
a_3\\
a_2\\
\end{array}\right) =
\left(\begin{array}{lllll}
c_2 & & & & \bigzero\\
c_1 & c_2 & &\\
c_0 & c_1 & \ddots &\\
\vdots & \vdots & \ddots & c_2\\
 & & \dots & c_1 & c_2
\end{array}\right) \cdot
\left(\begin{array}{l}
b_{n-3}\\
b_{n-4}\\
\vdots\\
b_1\\
b_0
\end{array}\right),
\end{equation}
where $c_0 = \alpha\beta$, $c_1 = -(\alpha + \beta)$, $c_2 = 1$.

Multiplication by the $(n-2)\times(n-2)$-matrix \eqref{eq-matr1} maps the region $\mathcal{M}^*(\alpha, \beta)$ into a parallelepiped of the unit volume.
The determinant of the matrix \eqref{eq-matr1} is equal to 1.
Hence, we have the upper bound:
\[
\mes_{n-2} \mathcal{M}^*(\alpha,\beta) \le 1.
\]

Estimate $|G(\mathbf{b},\alpha,\beta)|$ from above for $\mathbf{b}\in \mathcal{M}^*(\alpha,\beta)$. For this sake, find upper bounds on the coordinates $b_{n-3}, \dots, b_1, b_0$ in the region $\mathcal{M}^*(\alpha, \beta)$.

Since $|a+b|/2 \le 1$ and $0 < b-a \le 1$, we have the following estimate for the matrix coefficients in \eqref{eq-matr1}:
\[
\max\limits_{0\le i\le 2}|c_i| = O(1).
\]
Starting from $b_{n-3}$, we express the coefficients $b_i$ and obtain by induction
\begin{equation*}
\max\limits_{0\le i\le n-3} |b_i| \ll_n 1.
\end{equation*}
Hence, we have that $|G({\bf b},\alpha,\beta)| \ll_n 1$ for all $\mathbf{b}\in \mathcal{M}^*(\alpha, \beta)$.
So we obtain for $\alpha,\beta\in [a,b)$
\[
\int\limits_{\mathcal{M}^*(\alpha,\beta)}|G({\bf b},\alpha,\beta)|\,d{\bf b} \ \ll_n \ 1.
\]
Therefore, for $|a+b|/2 \le 1$ we have
\begin{equation}
\mes_n \mathcal{M} \ll_n |I|^3.
\end{equation}

2) Let $|a+b|/2 > 1$.

From \eqref{eq_zamena2_matr} we have
\begin{equation}\label{eq-matr2}
\left(\begin{array}{l}
a_{n-3}\\
a_{n-4}\\
\vdots\\
a_1\\
a_0
\end{array}\right) =
\left(\begin{array}{lllll}
c_0 & c_1 & \dots & & \\
 & c_0 & \ddots & \vdots & \vdots \\
 & & \ddots & c_1 & c_2\\
 & & & c_0 & c_1\\
\bigzero & & & & c_0
\end{array}\right) \cdot
\left(\begin{array}{l}
b_{n-3}\\
b_{n-4}\\
\vdots\\
b_1\\
b_0
\end{array}\right),
\end{equation}
where $c_0 = \alpha\beta$, $c_1 = -(\alpha + \beta)$, $c_2 = 1$.

Hence, we obtain the upper bound
\[
\mes_{n-2} \mathcal{M}^*(\alpha,\beta) \le |\alpha\beta|^{-n+2}.
\]

Find an upper bound for $|G(\mathbf{b},\alpha,\beta)|$. For this sake, estimate $|b_i|$ from above.

The matrix coefficients $c_i$ in \eqref{eq-matr2} can be estimated in the following manner:
\[
c_0 \asymp \rho^2, \quad c_1 = O(\rho), \quad c_2 = 1,
\]
where $\rho = |a+b|/2$. Here, we take into account that $0< b-a \le 1$.

By induction, we estimate from above $|b_i|$. From \eqref{eq-matr2}, we have
\begin{align*}
|b_0| &=  |c_0|^{-1} |a_0| = O(\rho^{-2}), \\
|b_1| &= |c_0|^{-1} |a_1 - c_1 b_0| = O(\rho^{-2})\; O(1 + \rho^{-1}) = O(\rho^{-2}).
\end{align*}
Proceeding by induction for $i=2,3,\dots,n-3$, we obtain
\[
|b_i| = |c_0|^{-1} |a_i - c_1 b_{i-1} - c_2 b_{i-2}| = O(\rho^{-2})\; O(1 + \rho^{-1} + \rho^{-2}) = O(\rho^{-2}),
\]
where the implicit big-O-constants depend only on $n$.

Hence, for any $x\in [a,b)$ we obtain $|g(\mathbf{b},x)| = O(\xi \rho^{n-2} + \rho^{n-5})$, so
\[
|G(\mathbf{b},\alpha,\beta)| = O\left(\rho^{2(n-2)} \left(\xi+\rho^{-3}\right)^2\right).
\]
Therefore, we obtain
\[
\mes_n \mathcal{M} \le \lambda(n) \left(\xi+\rho^{-3}\right)^2 |I|^3.
\]
The lemma is proved.
\end{proof}

\begin{lemma}[see, e.g., \cite{Koleda2017}]\label{lm-empty-int}
Let $x_0=a/b$ with $a\in\mathbb{Z}$, $b\in\mathbb{N}$ and $\gcd(a,b)=1$.
Then there are no algebraic numbers $\alpha$ of degree $\deg \alpha = n$ and height $H(\alpha)\le Q$ in the interval $|x-x_0|\le r_0$, where
\[
r_0=r_0(x_0, Q)=\frac{\kappa(n)}{\max(|a|,b)^n Q},
\]
and $\kappa(n)$ is an effective constant depending only on $n$.

For a neighborhood of infinity: no algebraic number $\alpha$ of degree $\deg(\alpha)=n$ and height $H(\alpha)\le Q$ lies in the set $\{x\in\mathbb{R}: |x|\ge Q+1\}$.
\end{lemma}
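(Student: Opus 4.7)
The plan is to handle both claims by the classical observation that a nonzero polynomial value at a rational point is bounded below in absolute value, combined with a mean-value estimate for the derivative.

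For the main statement, let $p(x) = c_n x^n + \dots + c_0$ be the minimal polynomial of $\alpha$, so $H(p) \le Q$, and suppose for contradiction that $|\alpha - x_0| \le r_0 \le 1$. Since $p$ is irreducible of degree $n \ge 2$, the rational $x_0 = a/b$ cannot be a root, so $b^n p(a/b)$ is a nonzero integer, giving $|p(a/b)| \ge b^{-n}$. The mean value theorem then yields
\[
b^{-n} \le |p(a/b) - p(\alpha)| \le |a/b - \alpha| \cdot \max_\xi |p'(\xi)|,
\]
where $\xi$ ranges over the segment joining $\alpha$ and $a/b$. I would estimate $|p'(\xi)| \le n^2 Q \max(1,|\xi|)^{n-1}$ and then, using $r_0 \le 1$, bound $\max(1,|\xi|) \le 2\max(1,|a|/b) = 2M/b$, where $M = \max(|a|,b)$. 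Chaining these inequalities and using $b \le M$ at the final step produces $|\alpha - a/b| \gg_n (Q M^n)^{-1}$, so any sufficiently small $\kappa(n)$ forces a contradiction with $|\alpha - x_0| \le r_0$.

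For the neighborhood of infinity, suppose $|\alpha| \ge Q+1$ and $H(\alpha) \le Q$. Rearranging $p(\alpha) = 0$ and summing the geometric series gives
\[
|c_n|\,|\alpha|^n \le Q\bigl(1 + |\alpha| + \dots + |\alpha|^{n-1}\bigr) < \frac{Q |\alpha|^n}{|\alpha| - 1},
\]
so $|c_n|(|\alpha|-1) < Q$. Since $|\alpha| - 1 \ge Q$ and the leading coefficient $c_n$ is a positive integer, this forces $|c_n| < 1$, a contradiction. The only non-routine point in the argument is the exponent bookkeeping in the first part: making sure the $b^{-n}$ from the lower bound and the $M^{n-1}/b^{n-1}$ from the derivative upper bound combine, via the single inequality $b \le M$, into precisely the denominator $QM^n$.
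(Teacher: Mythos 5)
Your proof is correct, but the first part takes a genuinely different route from the paper. The paper proves the bound only for $|x_0|\le 1$, where it can estimate $|p(x)-p(x_0)|\le \tfrac{n(n+1)}{2}H(p)\,|x-x_0|$ using $|x|,|x_0|\le 1$ against the Liouville-type bound $|p(x_0)|\ge b^{-n}$, and then transfers the result to $|x_0|>1$ via the inversion $x\mapsto x^{-1}$: the map $p(x)\mapsto x^n p(x^{-1})$ preserves degree, height and irreducibility, and the factor $xx_0>1$ only enlarges the excluded interval. You instead handle all $x_0$ in one stroke: the mean-value estimate with $|p'(\xi)|\ll_n Q\max(1,|\xi|)^{n-1}\ll_n Q\,(M/b)^{n-1}$ on the segment (valid because $r_0\le 1$ keeps $\max(1,|\xi|)\le 2M/b$), combined with $|p(a/b)|\ge b^{-n}$ and the single inequality $b\le M$, gives $|\alpha-x_0|\gg_n (QM^n)^{-1}$ directly, and your exponent bookkeeping ($b^{-n}\cdot b^{\,n-1}M^{-(n-1)}\ge M^{-n}$) is right. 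Your route buys a uniform, case-free argument at the cost of slightly worse explicit constants; the paper's route keeps each estimate very clean and, as a byproduct, records the reciprocal-invariance of the set of degree-$n$ algebraic numbers, which is a fact of independent use. For the neighborhood of infinity your argument (bounding $|c_n||\alpha|^n$ by the geometric series and using $|\alpha|-1\ge Q$, $|c_n|\ge 1$) is essentially the same as the paper's, and both parts are complete, granted the same implicit assumption the paper makes that $p(x_0)\neq 0$ (automatic for $n\ge 2$).
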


Note that the statement of the lemma implies that
$\Omega_n(Q,S) = 0$ if $S \cap (-Q-1,Q+1) = \varnothing$, and
$\Omega_n(Q,S) = \Omega_n(Q,\mathbb{R})$ if $(-Q-1,Q+1)\subseteq S$.

\begin{proof}
For the reader convenience we give proof here.

Let $p(x)=a_n x^n+\ldots+a_1 x+a_0\in\mathbb{Z}[x]$ with $H(p)\le Q$, and let $p(x_0)\ne 0$.
Then
\begin{equation}\label{eq_pa}
|p(x_0)| \ge \frac{1}{b^n}.
\end{equation}

On the other hand, we have:
\begin{equation*}
|p(x)-p(x_0)| \le H(p) \sum_{k=1}^n \left|x^k - x_0^k\right| = |x-x_0| H(p) \sum_{k=1}^n \left|\sum_{j=0}^{k-1} x^j x_0^{k-1-j}\right|.
\end{equation*}

Assuming $|x_0|\le 1$, $|x|\le 1$ and $x\ne x_0$, we obtain
\begin{equation}\label{eq_dpa}
|p(x)-p(x_0)| < |x-x_0| H(p) \sum_{k=1}^n k = \frac{n(n+1)}{2} H(p) |x-x_0|.
\end{equation}
The estimates \eqref{eq_pa} and \eqref{eq_dpa} show that $x$ cannot be a root of $p(x)$ if $x$ is sufficiently close to $x_0$, namely, if $x$ satisfies the inequality
\[
\frac{n(n+1)}{2} H(p)\,|x-x_0| \le \frac{1}{b^n}.
\]
Denoting $\kappa(n) = \frac{2}{n(n+1)}$ we obtain the lemma for $|x_0|\le 1$, namely, no algebraic number~$\alpha$ of degree~$n$ and height~$H$ can satisfy the inequality
\begin{equation*}
|\alpha-x_0| \le \frac{\kappa(n)}{b^n H}.
\end{equation*}

Now we deal with the case $|x_0|>1$. Let us show how it can be reduced to the case $|x_0| < 1$.
Note that the set of all real/non-real/complex algebraic numbers of any fixed degree $n$ is invariant under the mapping $x\to x^{-1}$.
This is because if an integer polynomial $p(x)$ of degree $n$ is irreducible over $\mathbb{Q}$, then the polynomial $x^n p(x^{-1})$ also has
integer coefficients,
the same degree and height,
and is irreducible over $\mathbb{Q}$; and vice versa.

Suppose that $|x|>1$, and $x$ has the same sign as $x_0$. Then the interval between $\widetilde{x}_0=x_0^{-1}=b/a$ and $\widetilde{x}=x^{-1}$ contains no algebraic numbers if and only if the interval with the end points $x_0$ and $x$ is free of algebraic numbers too.
From the first part of the proof, we know this to happen when
\[
|\widetilde{x}-\widetilde{x}_0| \le \kappa(n) H^{-1} |a|^{-n},
\]
or equivalently, in terms of $x_0$ and $x$,
\[
|x-x_0| \le \kappa(n) H^{-1} |a|^{-n} x x_0.
\]
Since $x x_0>1$, there are surely no algebraic numbers $\alpha$ of degree $n$ and height $H$ such that
\[
|\alpha-x_0| \le \kappa(n) H^{-1} |a|^{-n}.
\]

Let $|\beta|\ge Q+1$. Then
\[
|p(\beta)| \ge |\beta|^n - Q|\beta|^{n-1} - \ldots - Q|\beta|-Q \ge 1.
\]
Thus, the number $\beta$ cannot be a root of the polynomial $p(x)$.
Besides, hence, one can obtain that there no algebraic numbers $\beta$ of height $H(\beta)\le Q$ such that $|\beta|\le (Q+1)^{-1}$.
The lemma is proved.
\end{proof}

\begin{lemma}\label{lm-Perron-n}
Let $\alpha$ be an algebraic integer of degree $n$ and height $H$.
If $\alpha > (n+1)^{1/4} H^{1/2}$, then $\alpha$ is a Perron number.
\end{lemma}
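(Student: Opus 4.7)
My plan is to obtain the Perron property by comparing $\alpha$ against its conjugates through the Mahler measure, which links naive coefficient size to root moduli via Landau's inequality.

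First, I set up notation. Let $p(x) = x^n + a_{n-1}x^{n-1} + \ldots + a_0$ be the minimal polynomial of $\alpha$ with roots $\alpha_1=\alpha, \alpha_2, \ldots, \alpha_n$, and note that under the hypothesis we automatically have $H \ge 1$ (otherwise $p(x)=x^n$ and $\alpha=0$, violating $\alpha > (n+1)^{1/4}H^{1/2}$). In particular $\alpha > (n+1)^{1/4}H^{1/2} \ge 1$. I need to show $|\alpha_j| < \alpha$ for every $j \ge 2$, so the only case to worry about is when $|\alpha_j| \ge 1$ (the case $|\alpha_j| < 1 \le \alpha$ being trivial).

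The key ingredient is Landau's inequality $M(p) \le \|p\|_2$, where $\|p\|_2 = \bigl(\sum_{i=0}^n a_i^2\bigr)^{1/2}$. Since $p$ is monic with $n$ further coefficients each of absolute value at most $H$, I get $\|p\|_2^2 \le 1 + n H^2 \le (n+1)H^2$, hence
\[
M(p) \le \sqrt{n+1}\,H.
\]
Because $M(p) = \prod_{i=1}^n \max(1, |\alpha_i|)$ for a monic polynomial, and every factor is at least $1$, dropping all but two factors yields, for any conjugate $\alpha_j$ with $|\alpha_j| \ge 1$,
\[
\alpha \cdot |\alpha_j| \; \le \; M(p) \; \le \; \sqrt{n+1}\,H.
\]

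Now suppose for contradiction that some conjugate satisfies $|\alpha_j| \ge \alpha$. Then the displayed inequality gives
\[
|\alpha_j| \;\le\; \frac{\sqrt{n+1}\,H}{\alpha} \;<\; \frac{\sqrt{n+1}\,H}{(n+1)^{1/4} H^{1/2}} \;=\; (n+1)^{1/4} H^{1/2} \;<\; \alpha,
\]
where the middle step uses the hypothesis $\alpha > (n+1)^{1/4}H^{1/2}$. This contradicts $|\alpha_j| \ge \alpha$, so $|\alpha_j| < \alpha$ for every conjugate, i.e.\ $\alpha$ is a Perron number. There is no real obstacle here; the only thing to verify carefully is that the exponents $1/4$ and $1/2$ in the hypothesis match the $\alpha^2$ that comes out of multiplying $\alpha$ by the assumed-larger conjugate, which they do exactly.
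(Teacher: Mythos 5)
Your proof is correct and follows essentially the same route as the paper's: bound $M(p)$ from below by the product of the two largest root moduli using $M(p)=\prod\max(1,|\alpha_i|)$, bound it from above by $(n+1)^{1/2}H$ via Landau's inequality, and combine. The only cosmetic difference is that you derive the estimate $M(p)\le\sqrt{n+1}\,H$ explicitly and phrase the conclusion as a contradiction, whereas the paper cites the inequality and argues directly.
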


By the way, we prove a stronger statement: every real algebraic integer $\alpha$ satisfying $\alpha > (M(\alpha))^{1/2}$ is a~Perron number.

\begin{proof}
Let $\alpha_1,\dots,\alpha_n$ be the roots of a real monic polynomial $p$ of degree $n$ numbered so that
\[
|\alpha_1|\ge |\alpha_2|\ge \dots \ge |\alpha_n|.
\]
Since the Mahler measure of $p$ is equal to
$M(p) = \prod_{k=1}^n \max(1,|\alpha_i|)$,
we get
\[
M(p)\ge |\alpha_1| |\alpha_2|.
\]
On the other hand, we have (see Theorem 4.2.1 in \cite[p.~142]{Pr04})
\[
M(p) \le (n+1)^{1/2} H(p).
\]
Thus, for any algebraic integer $\alpha_1 > (n+1)^{1/4} H(\alpha_1)^{1/2}$, its conjugates $\alpha_2,\dots,\alpha_n$ satisfy
\[
\max_{2\le k\le n}|\alpha_k| \le \frac{(n+1)^{1/2} H(\alpha_1)}{\alpha_1} < \alpha_1,
\]
hence $\alpha_1$ is a Perron number.
\end{proof}

\section{Approaching to Theorem \ref{thm-main}}\label{sec-mainproof}

The aim of this section is to prove the following theorem, which can be regarded as an ``integer counterpart'' of Theorem \ref{thm-algnum}.

\begin{theorem}\label{thm-int-counterpart}
Let $n\ge 2$ be a fixed integer. For any interval $I\subseteq\mathbb{R}$ we have:
\begin{equation}\label{eq-O/omega}
\Omega_n(Q,I) = Q^n \int_I \omega_n(Q^{-1}, t) \,dt + O\!\left(Q^{n-1}(\ln Q)^{\ell(n)}\right),
\end{equation}
where the function $\omega_n(\xi, t)$ can be written in the form:
\begin{equation}\label{eq-omega-def}
\omega_n(\xi, t) = \int\limits_{D_n(\xi, t)} \left|n\xi t^{n-1} + \sum_{k=1}^{n-1} kp_k t^{k-1} \right| \,dp_1 \dots dp_{n-1},
\end{equation}
with
\[
D_n(\xi, t) = \left\{(p_1,\dots,p_{n-1})\in\mathbb{R}^{n-1} : \max_{1\le k\le n-1}|p_k|\le 1, \ \left|\xi t^n + \sum_{k=1}^{n-1} p_k t^k \right| \le 1\right\}.
\]

In the remainder term, the implicit constant depends only on the degree $n$.
Besides, there exist intervals, for which the error of this formula has the order $O(Q^{n-1})$.
\end{theorem}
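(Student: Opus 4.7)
The approach is to extend the sum over irreducible monic integer polynomials to all monic integer polynomials, then correct for the discrepancy, and identify the resulting unrestricted count with the volume formula for $\omega_n$. Every $\alpha \in \mathcal{O}_n$ is a root of a unique monic integer irreducible polynomial of degree $n$, so $\Omega_n(Q,I)$ equals $\sum_p \#\{t\in I : p(t)=0\}$ summed over such irreducible polynomials of height at most $Q$. Adding the reducible polynomials yields $N_n(Q,I)$, and the extra contribution is at most $n\cdot\mathcal{R}_n(Q) = O(Q^{n-1}(\ln Q)^{\ell(n)})$ by Lemma~\ref{lm-red-m-pol}. Hence it suffices to prove \eqref{eq-O/omega} with $\Omega_n$ replaced by $N_n$.

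The key parametrization is: a monic $p(x) = x^n + \sum_{k=0}^{n-1} a_k x^k$ has $t$ as a root iff $a_0 = -q(t)$, where $q(t) := t^n + \sum_{k=1}^{n-1} a_k t^k$ depends only on $\mathbf{a}' = (a_1,\ldots,a_{n-1})$. For each $\mathbf{a}' \in \mathbb{Z}^{n-1} \cap [-Q,Q]^{n-1}$, decompose $I$ into the at most $n$ intervals on which $q$ is monotonic; on each piece the number of integer values of $-q(t)$ lying in $[-Q,Q]$ equals the length of its image (intersected with $[-Q,Q]$) up to $O(1)$, giving
\[
\sum_{a_0 \in \mathbb{Z} \cap [-Q,Q]} \#\{t \in I : q(t) = -a_0\} = \int_I |q'(t)|\,\mathbf{1}_{|q(t)|\le Q}\,dt + O(n).
\]
Summing over $\mathbf{a}'$,
\[
N_n(Q,I) = \sum_{\mathbf{a}' \in \mathbb{Z}^{n-1}\cap[-Q,Q]^{n-1}} \int_I |q'(t)|\,\mathbf{1}_{|q(t)|\le Q}\,dt + O(Q^{n-1}).
\]

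The continuous analog of this sum, obtained by replacing $\sum_{\mathbf{a}'}$ by $\int_{[-Q,Q]^{n-1}} d\mathbf{a}'$, is
\[
\int_I \int_{R(t)} |q'(t)|\, d\mathbf{a}'\, dt, \qquad R(t) := \{\mathbf{a}' \in [-Q,Q]^{n-1} : |q(t)|\le Q\},
\]
and the rescaling $a_k = Q p_k$, $\xi = Q^{-1}$, together with the definitions of $\omega_n$ and $D_n$, identifies this as $Q^n \int_I \omega_n(Q^{-1},t)\,dt$. The remaining discretization error between the $\mathbf{a}'$-sum and $\mathbf{a}'$-integral can be handled by Lemma~\ref{lm-int-p-num} via a lifting trick: writing $|q'(t)| = \tfrac{1}{2}\operatorname{mes}_1\{y : |y|\le |q'(t)|\}$ reduces the weighted sum, for each fixed $t$, to counting lattice points in the $n$-dimensional region $W(t) = \{(\mathbf{a}',y) : \mathbf{a}' \in R(t),\ |y|\le |q'(t;\mathbf{a}')|\}$. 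The main obstacle is bounding the Davenport error $\bar{V}(W(t))$ uniformly in $t$ and integrating it over $t\in I\cap[-Q-1,Q+1]$ (the restriction is justified by Lemma~\ref{lm-empty-int}): for $|t|\le 1$ the region $R(t)$ fills an essentially full box, whereas for $|t|>1$ the constraint $|q(t)|\le Q$ cuts a slab of thickness $O(Q/|t|^{n-1})$, so that $\int_1^{Q+1} dt/|t|^{n-1}$ contributes an $O(\ln Q)$ factor exactly when $n=2$. This is precisely why the degree $n=2$ is inherently more delicate (Lemma~\ref{lm-2roots} is too weak to handle it by the $n\ge 3$ argument), motivating the separate treatment in Subsection~\ref{sec-2deg}.
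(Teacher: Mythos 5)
Your reduction to all monic polynomials via Lemma~\ref{lm-red-m-pol}, the fibering over the constant term $a_0$ (counting integers in the image of $q$ on monotone pieces), and the identification of the continuous analogue $\int_I\int_{R(t)}|q'(t)|\,d\mathbf{a}'\,dt$ with $Q^n\int_I\omega_n(Q^{-1},t)\,dt$ are all correct, and this is genuinely a different route from the paper: the paper applies Davenport's lemma (Lemma~\ref{lm-int-p-num}) \emph{once}, to the $t$-independent sets $\mathcal{G}_n(Q^{-1},k,I)$, which after scaling sit inside the unit cube so that all projections are trivially admissible, and then does every $t$-dependent manipulation at the level of Lebesgue measure (the sign-change set $\mathcal{B}(\xi,I)$, additivity of $\widehat\Omega_n$, and Lemma~\ref{lm-2roots} to kill multiple roots). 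Your plan instead pushes the lattice-versus-volume comparison inside the $t$-integral, which is exactly where the difficulty concentrates.

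That discretization step is a genuine gap, and the tool you invoke does not close it. Applying Lemma~\ref{lm-int-p-num} to the lifted region $W(t)=\{(\mathbf{a}',y):\mathbf{a}'\in R(t),\ |y|\le|q'(t;\mathbf{a}')|\}$ produces the error $\bar V(W(t))$, the maximum over \emph{all} coordinate projections, including those containing the $y$-axis. On $R(t)$ one has $\max|q'(t;\mathbf{a}')|\asymp |t|^{n-1}+Q|t|^{n-3}+Q/|t|$ for $|t|\ge 1$ (take $a_1=\dots=a_{n-2}=0$ and $a_{n-1}\approx -t$ to see the lower bound), so already the one-dimensional projection of $W(t)$ onto the $y$-axis has measure $\gtrsim |t|^{n-1}$, and $\int_1^{Q}t^{n-1}\,dt\asymp Q^n$ — the size of the main term; the projection onto $(a_1,\dots,a_{n-2},y)$ is larger still, of order $Q^{n-2}\max|q'|$. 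So the implicit claim that the Davenport error behaves like $Q^{n-2}\bigl(1+Q|t|^{-(n-1)}\bigr)$ and integrates to $Q^{n-1}(\ln Q)^{\ell(n)}$ is false for the construction as stated. (The intrinsic rounding of the lift, $\#\{y\in\mathbb{Z}:|y|\le Y\}=2Y+O(1)$, i.e.\ $O(1)$ per lattice point of $R(t)$, does integrate acceptably; it is the projection term that breaks.) To salvage the route you would need something extra — e.g.\ a $t$-dependent \emph{integer} unimodular shear aligning the slab $|q(t)|\le Q$ with a coordinate direction before invoking Davenport, or a fiberwise comparison in $a_{n-1}$ with an equidistribution argument for the slab's position modulo $1$ — none of which is sketched. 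Note also that in the paper the factor $(\ln Q)^{\ell(n)}$ in \eqref{eq-O/omega} comes solely from the reducible-polynomial count of Lemma~\ref{lm-red-m-pol}, not from any slab integral, so your closing explanation of why $n=2$ is special attributes the logarithm to a mechanism that is neither needed nor justified here.
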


Let $I = [\alpha, \beta)$ be a finite interval. Denote by $\mathcal{N}_n(Q,k,I)$, $\widetilde{\mathcal{N}}_n(Q,k,I)$ and $\mathcal{R}_n(Q,k,I)$ the number of (respectively) all, irreducible, and reducible integer monic polynomials of degree $n$ and height at most $Q$ having exactly $k$ roots in~$I$. It is easy to see that
\begin{equation}\label{eq-Omega-N}
\Omega_n(Q,I) = \sum_{k=1}^n k\, \widetilde{\mathcal{N}}_n(Q,k,I).
\end{equation}

Evidently, $\widetilde{\mathcal{N}}_n(Q,k,I) = \mathcal{N}_n(Q,k,I) - \mathcal{R}_n(Q,k,I)$.

Let $\mathcal{G}_n(\xi,k,S)$ be the set of real polynomials of degree $n$ and height at most~1 with the leading coefficient $\xi$ having exactly $k$ roots (with respect to multiplicity) in a set $S$. Then from Lemma \ref{lm-int-p-num}, we have
\begin{equation*}
\mathcal{N}_n(Q,k,I) = Q^n \mes_n \mathcal{G}_n(Q^{-1},k,I) + O\!\left(Q^{n-1}\right).
\end{equation*}
Lemma \ref{lm-red-m-pol} gives the following bound on $\mathcal{R}_n(Q,k,I)$:
\[
\mathcal{R}_n(Q,k,I) \le \mathcal{R}_n(Q) \ll_n Q^{n-1} (\ln Q)^{\ell(n)}.
\]
Hence, we have
\begin{equation}\label{eq-N-G}
\widetilde{\mathcal{N}}_n(Q,k,I) = Q^n \mes_n \mathcal{G}_n(Q^{-1},k,I) + O\!\left(Q^{n-1} (\ln Q)^{\ell(n)}\right),
\end{equation}
where in the big-O-notation the implicit constant depends only on $n$.

It is easy to see that for all $0<\xi\le 1$ the function
\begin{equation}\label{eq-Omega-G}
\widehat{\Omega}_n(\xi, S):= \sum_{k=1}^n k \mes_n \mathcal{G}_n(\xi,k,S)
\end{equation}
is additive and bounded on the set of all subsets $S\subseteq \mathbb{R}$.

Let us show that $\widehat{\Omega}_n(\xi, I)$ can be written as the integral of a continuous function over~$I$.
Let
\begin{equation}\label{eq-B-def}
\mathcal{B}(\xi, I) = \left\{{\bf p}\in\mathbb{R}^{n+1} : \deg(p(x)-\xi x^n) < n, \  p(\alpha)p(\beta)< 0, \ H(p)\le 1 \right\},
\end{equation}
where ${\bf p} = (\xi, p_{n-1},\ldots,p_1,p_0)$ is the vector of the coefficients of the polynomial $p(x)= \xi x^n+\ldots+p_1 x+p_0$, and $\xi = Q^{-1}$.
Obviously, every polynomial from $\mathcal{B}(\xi,I)$ has the odd number of roots in the interval~$I$.

In Lemma \ref{lm-2roots}, we have $\mathcal{M}_n(\xi, I) = \bigcup_{k=2}^n\mathcal{G}_n(\xi, k, I)$. Hence, it follows that
\begin{equation}\label{eq-Omega-D}
\widehat{\Omega}_n(\xi, I) = \mes_n \mathcal{B}(\xi, I) + O(|I|^3),
\end{equation}
where the implicit big-O-constant depends only on $n$.

Now we calculate
\[
\mes_n \mathcal{B}(\xi, I) = \int\limits_{\mathcal{B}(\xi, I)} dp_0\,dp_1 \dots dp_{n-1}.
\]
To do this, we represent $\mathcal{B}(\xi, I)$ in a more convenient form.
In order to simplify notation, let us introduce the symbol
\[
g_t = g_t(p_1,\dots,p_{n-1}) = \xi t^n + \sum_{k=1}^{n-1} p_k t^k.
\]
Then we have $p(t) = g_t(p_1,\dots,p_{n-1}) + p_0$.

The inequality $p(\alpha)p(\beta)< 0$ from \eqref{eq-B-def} is equivalent to one of the two inequality systems:
\[
\begin{cases}
g_\alpha + p_0 < 0,\\
g_\beta + p_0 > 0,
\end{cases}
\qquad \text{or} \qquad
\begin{cases}
g_\alpha + p_0 > 0,\\
g_\beta + p_0 < 0,
\end{cases}
\]
which can be joined into the single double inequality
\[
-\max(g_\alpha, g_\beta) \le p_0 \le -\min(g_\alpha, g_\beta).
\]
So the region $\mathcal{B}(\xi, I)$ can be defined by the following inequalities
\begin{equation}\label{eq-sys}
\begin{cases}
\max\limits_{0\le k\le n-1} |p_k| \le 1,\\
-\max(g_\alpha, g_\beta) \le p_0 \le -\min(g_\alpha, g_\beta).
\end{cases}
\end{equation}
In this system, $p_0$ must satisfy two interval constraints.

In the next step we estimate the set $\widetilde{D}$ of all $(p_1,\dots,p_{n-1})\in\mathbb{R}^{n-1}$ for which the system~\eqref{eq-sys} has solutions in $p_0$.
In fact, $\widetilde{D}$ is the orthogonal projection of $\mathcal{B}(\xi, I)$ on the hyperplane $p_0=0$.
Consider the regions
\[
D_* := D_n(\xi, \alpha)\cap D_n(\xi, \beta), \qquad
D^* := D_n(\xi, \alpha)\cup D_n(\xi, \beta),
\]
where
\[
D_n(\xi, t) := \left\{(p_1,\dots,p_{n-1})\in\mathbb{R}^{n-1} : \max_{1\le i\le n-1} |p_i|\le 1, \ \left|g_t(p_1,\dots,p_{n-1}) \right| \le 1\right\}.
\]
The inequalities $|g_\alpha| \le 1$ and $|g_\beta| \le 1$ hold for all $(p_1,\dots,p_{n-1})\in D_*$. For any $(p_1,\dots,p_{n-1})\not\in D^*$, the inequalities $|g_\alpha| > 1$ and $|g_\beta| > 1$ hold simultaneously, and so for sufficiently close $\alpha$ and $\beta$ the system of inequalities \eqref{eq-sys} is contradictory. Otherwise, there would be two possibilities: $g_\alpha g_\beta > 0$ or $g_\alpha g_\beta < 0$.
If $g_\alpha$ and $g_\beta$ had the same sign, then $|p_0|>1$; but from the same system we have $|p_0|\le 1$.
If $g_\alpha$ and $g_\beta$ had opposite signs, then the inequality $|g_\beta-g_\alpha| > 2$ would hold; but the difference $|g_\beta-g_\alpha|$ uniformly tends to zero for all $(p_1,\dots,p_{n-1})\in[-1,1]^{n-1}$ as $\alpha$ and $\beta$ converge.
So we proved that
\[
D_* \subseteq \widetilde{D} \subseteq D^*.
\]
Moreover, for any fixed $(p_1,\dots,p_{n-1})\in\widetilde{D}$ the measure of values $p_0$ defined by \eqref{eq-sys} does not exceed
\begin{equation*}
h_\xi(p_{n-1},\ldots,p_1):= |g_\beta - g_\alpha|.
\end{equation*}
And for every fixed $(p_1,\dots,p_{n-1})\in D_*$ this measure is equal to $h_\xi(p_{n-1},\ldots,p_1)$.
Therefore,
\begin{equation*}
\int\limits_{D_*} h_\xi(p_{n-1},\ldots,p_1)\,dp_{n-1}\ldots dp_1
\le \ \mes_n\mathcal{B}(\xi, I) \ \le
\int\limits_{D^*} h_\xi(p_{n-1}\ldots,p_1)\,dp_{n-1}\ldots dp_1.
\end{equation*}
Now the obvious inclusion $D_* \subseteq D_n(\xi,\alpha) \subseteq D^*$ implies
\[
\left|\mes_n\mathcal{B}(\xi, I)\, - \!\int\limits_{D_n(\xi, \alpha)} h_\xi(p_{n-1},\ldots,p_1)\,dp_{n-1}\ldots dp_1 \right| \le
\int\limits_{D^*\setminus D_*} h_\xi(p_{n-1}\ldots,p_1)\,dp_{n-1}\ldots dp_1.
\]
It is easy to show that the difference of $D^*$ and $D_*$ has a small measure for sufficiently close $\alpha$ and $\beta$:
\[
\mes_{n-1} (D^* \setminus D_*) = O(\beta-\alpha).
\]
As noted above, $h_\xi(p_{n-1},\ldots,p_1)\to 0$ as $\beta\to\alpha$. Thus, we obtain for all $\alpha\in\mathbb{R}$
\begin{equation*}
\mes_n \mathcal{B}(\xi, I)=\omega_n(\xi,\alpha)|I|+o(|I|),
\end{equation*}
where $\omega_n(\xi,t)$ is defined in \eqref{eq-omega-def}.

Hence, as $|I|\to 0$, from \eqref{eq-Omega-D} we obtain that
\[
\widehat{\Omega}_n(\xi,I) = \omega_n(\xi,\alpha)|I|+o(|I|),
\]
So we have
\[
\widehat{\Omega}_n(\xi,I)  = \int_I\omega_n(\xi,t)\,dt.
\]
Therefore, from \eqref{eq-Omega-N}, \eqref{eq-N-G} and \eqref{eq-Omega-G}, we obtain the main theorem. Lemma \ref{lm-empty-int} shows that there exist infinitely many intervals $I$, for which the error of the asymptotic formula \eqref{eq-O/omega} is of the order $O(Q^{n-1})$.
Theorem \ref{thm-int-counterpart} is proved.

\section{Proving Theorems \ref{thm-main} and \ref{thm-total}}\label{sec-limit}

\subsection{Quadratic algebraic integers}\label{sec-2deg}

In this subsection, to give a full picture, we recall some relevant facts about the density function $\omega_2(\xi,t)$ of real quadratic algebraic integers.

Quadratic integers are considered separately from the general case $n\ge 3$, since the techniques used for $n\ge 3$ do not work for $n=2$, because of the lower dimension of spaces arising in the proof.
\begin{theorem}[\cite{Koleda2016}]\label{thm-omega2}
For $\xi\le 1/4$,
\begin{equation*}
\omega_2(\xi, t) = \begin{cases}
1 + 4 \xi^2 t^2, & |t| \le t_1,\\
\frac{1}{2t^2} + \frac12 + \xi (1-2|t|) + \frac52 \xi^2 t^2, & t_1 < |t| \le t_2,\\
\frac{1}{t^2} + \xi^2 t^2, & t_2 < |t| \le t_3,\\
2\xi, & t_3 < |t| \le t_4,\\
\frac{1}{2t^2} - \frac12 + \xi (1+2|t|) - \frac32 \xi^2 t^2, &  t_4 < |t| \le t_5,\\
0, & |t| > t_5.
\end{cases}
\end{equation*}
Here
\begin{align*}
t_1 &= t_1(\xi) = \frac{-1+\sqrt{1+4\xi}}{2\xi}, & t_2 & = t_2(\xi) = \frac{1-\sqrt{1-4\xi}}{2\xi}, & t_3 &= t_3(\xi) = \frac{1}{\sqrt{\xi}}, \\
t_4 &= t_4(\xi) = \frac{1+\sqrt{1-4\xi}}{2\xi}, & t_5 & = t_5(\xi) = \frac{1+\sqrt{1+4\xi}}{2\xi}.
\end{align*}
\end{theorem}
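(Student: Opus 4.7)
Specialising the integral \eqref{eq-omega-def} to $n=2$, the density reduces to the one-dimensional integral
\[
\omega_2(\xi, t) \;=\; \int_{D_2(\xi, t)} |2\xi t + p_1|\,dp_1, \qquad D_2(\xi, t) = \bigl\{p_1\in\mathbb{R}:|p_1|\le 1,\ |\xi t^2 + p_1 t|\le 1\bigr\}.
\]
The substitution $p_1\mapsto -p_1$ takes $D_2(\xi, t)$ to $D_2(\xi,-t)$ and preserves the integrand, so $\omega_2(\xi,\cdot)$ is even and it suffices to work with $t>0$.

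For $t>0$ the set $D_2(\xi,t)$ is the (possibly empty) interval $[L(t), U(t)]$ with
\[
L(t)=\max\bigl(-1,\,-\tfrac{1}{t}-\xi t\bigr), \qquad U(t)=\min\bigl(1,\,\tfrac{1}{t}-\xi t\bigr).
\]
The first step is to find all $t>0$ where $L$ or $U$ switches branch. Solving $\tfrac{1}{t}-\xi t=1$ gives $t_1$; solving $\tfrac{1}{t}+\xi t=1$ (equivalently $\xi t^2-t+1=0$, which has real roots precisely because $\xi\le 1/4$) yields $t_2$ and $t_4$; solving $\tfrac{1}{t}-\xi t=-1$ gives $t_5$; and $D_2(\xi,t)=\varnothing$ for $t>t_5$. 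One additional breakpoint, $t_3=1/\sqrt{\xi}$, appears for a separate reason: it is the unique $t>0$ at which the zero of the integrand, $p_1=-2\xi t$, coincides with the lower endpoint $L(t)$. For $t\le t_3$ the integrand changes sign inside $D_2$, while for $t>t_3$ it is strictly positive there. A routine check confirms $t_1<t_2<t_3<t_4<t_5$ for $\xi<1/4$, with coincidences at $\xi=1/4$.

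With the five breakpoints in place, on each of the six subintervals $(0,t_1]$, $(t_1,t_2]$, $(t_2,t_3]$, $(t_3,t_4]$, $(t_4,t_5]$, $(t_5,\infty)$ both $L$ and $U$ are explicit, so one evaluates the integral by the elementary antiderivative: for $c\in[a,b]$,
\[
\int_a^b |p_1 - c|\,dp_1 \;=\; \tfrac{1}{2}\bigl[(c-a)^2+(b-c)^2\bigr],
\]
and the sign-constant analogue when $c\notin[a,b]$. Substituting $c=-2\xi t$ together with the branch-dependent values of $L(t)$ and $U(t)$, and simplifying, reproduces each of the six cases of the claimed formula.

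The argument contains no conceptual obstacle; the work lies entirely in the bookkeeping --- correctly identifying the five breakpoints, tracking which endpoint of $D_2$ is active in each regime, and verifying that the resulting expressions match the compact forms in the statement (continuity of $\omega_2(\xi,\cdot)$ at each $t_i$ provides a convenient sanity check). The hypothesis $\xi\le 1/4$ enters solely to guarantee that $t_2,t_4$ are real, so that all six regimes are genuinely present; for $\xi>1/4$ the middle plateau would collapse.
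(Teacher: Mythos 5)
Your argument is correct. Specialising \eqref{eq-omega-def} to $n=2$ indeed gives $\omega_2(\xi,t)=\int_{D_2(\xi,t)}|2\xi t+p_1|\,dp_1$ with $D_2(\xi,t)=[L(t),U(t)]$, $L=\max(-1,-\tfrac1t-\xi t)$, $U=\min(1,\tfrac1t-\xi t)$ for $t>0$; your breakpoints are exactly the branch switches ($t_1$ from $\tfrac1t-\xi t=1$, $t_2,t_4$ from $\tfrac1t+\xi t=1$, $t_5$ from $\tfrac1t-\xi t=-1$, emptiness beyond $t_5$) together with $t_3=\xi^{-1/2}$, where the zero $p_1=-2\xi t$ of the integrand leaves the interval; and the elementary integrations do reproduce all six cases (I spot-checked them, e.g.\ on $(t_1,t_2]$ one gets $\tfrac12\bigl[(1-2\xi t)^2+(\tfrac1t+\xi t)^2\bigr]=\tfrac{1}{2t^2}+\tfrac12+\xi(1-2t)+\tfrac52\xi^2t^2$, and on $(t_3,t_4]$ the positive integrand gives $(U-L)\bigl(\tfrac{U+L}{2}+2\xi t\bigr)=2\xi$). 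Be aware, though, that this paper contains no proof of Theorem \ref{thm-omega2} to compare against: the statement is imported from \cite{Koleda2016}, and Subsection \ref{sec-2deg} merely recalls it (correcting a typo in the source), so your direct case-by-case evaluation of the one-dimensional integral is a self-contained verification of the quoted result rather than an alternative to an argument given here; it is in any case the natural route for $n=2$, where the higher-dimensional machinery of Section \ref{sec-ndeg} is explicitly said not to apply.
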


For the record, these values $t_i$ have the following asymptotics as $\xi\to 0$:
\begin{align*}
t_1 &= 1-\xi+O(\xi^2), &t_4 &= \xi^{-1}-1-\xi+O(\xi^2), \\ 
t_2 &= 1+\xi+O(\xi^2), &t_5 &= \xi^{-1}+1-\xi+O(\xi^2). 
\end{align*}

Note that in \cite{Koleda2016} the expression (12) for $\omega_2(\xi,t)$ contains a typo: in the first case $\xi$~must be squared. Here we use the correct version.

From the general formula \eqref{eq-phi-def} we have
$\phi_1(t)=\frac{1}{\max(1,t^2)}$.
From Theorem~\ref{thm-omega2} one can see, the difference $|\omega_2(\xi, t)-\phi_1(t)|$ has the order $O(\xi)$ in a neighborhood of $t=1$ as $\xi\to 0$.
Whereas when $n\ge 3$ Lemma \ref{lm-delta} (see Section \ref{sbsbsec-proof-idiff} below) gives the magnitude $O(\xi^2)$ for $|\omega_n(\xi, t)-\phi_{n-1}(t)|$ in the same neighborhood. For all other $t$, except the two intervals $|t|\in (t_1,t_2)$, the general bound from Lemma \ref{lm-delta} holds for the quadratic case too.

However, despite the uniform convergence of $\omega_2(\xi, t)$ to $\phi_1(t)$ as $\xi\to 0$,
the following theorem is true.
\begin{theorem}[\cite{Koleda2016}]
For $\xi\le 1/4$
\begin{equation*}
\int\limits_{-\infty}^{+\infty} \left(\omega_2(\xi, t) - \phi_1(t)\right) dt = 4 - \frac{16}{3} \sqrt{\xi} + O(\xi),
\end{equation*}
where the implicit constant in the big-O-notation is absolute.
\end{theorem}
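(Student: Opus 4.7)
The proof is a direct piecewise computation using the explicit formula for $\omega_2(\xi,t)$ from Theorem~\ref{thm-omega2} together with $\phi_1(t)=1/\max(1,t^2)$. Since both functions are even in $t$, I restrict to $t\ge 0$ and double at the end. The integral then decomposes as a sum of six pieces over $[0,t_1]$, $[t_1,t_2]$, $[t_2,t_3]$, $[t_3,t_4]$, $[t_4,t_5]$, $(t_5,\infty)$. The last piece contributes nothing since $\omega_2=0$ there and $\int_{t_5}^\infty t^{-2}dt = 1/t_5 = O(\xi)$.

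The main terms come from just two regions. On $[t_2,t_3]$ one has $\omega_2-\phi_1=\xi^2 t^2$, so
\[
\int_{t_2}^{t_3}(\omega_2-\phi_1)\,dt=\tfrac{\xi^2}{3}(t_3^3-t_2^3)=\tfrac{1}{3}\xi^{1/2}+O(\xi^2),
\]
using $t_3=\xi^{-1/2}$ and $t_2=1+O(\xi)$. On $[t_3,t_4]$ one has $\omega_2-\phi_1=2\xi-t^{-2}$, giving
\[
\int_{t_3}^{t_4}(\omega_2-\phi_1)\,dt=2\xi(t_4-t_3)+t_4^{-1}-t_3^{-1}=2-3\xi^{1/2}+O(\xi),
\]
where I use the asymptotics $t_4=\xi^{-1}-1+O(\xi)$ and the identity $t_4^{-1}=\tfrac{1}{2}(1-\sqrt{1-4\xi})=\xi+O(\xi^2)$ obtained by rationalizing. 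Doubling these two contributions yields $\tfrac{2}{3}\xi^{1/2}+4-6\xi^{1/2}+O(\xi)=4-\tfrac{16}{3}\xi^{1/2}+O(\xi)$, which is the asserted value.

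It remains to verify that the three regions $[0,t_1]$, $[t_1,t_2]$, $[t_4,t_5]$ contribute only $O(\xi)$. For $[0,t_1]$, $\omega_2-\phi_1=4\xi^2 t^2$ integrates to $O(\xi^2)$. For $[t_1,t_2]$, the integrand is bounded and the interval has length $t_2-t_1=2\xi+O(\xi^2)$, so the contribution is $O(\xi)$; one small technicality here is to split at $t=1$ (where $\phi_1$ changes form) into $[t_1,1]$ and $[1,t_2]$ and integrate each, but the bound is the same. For $[t_4,t_5]$, one uses $t_5-t_4=2+O(\xi^2)$, $t_5^2-t_4^2=4\xi^{-1}+O(\xi)$, $t_5^3-t_4^3=6\xi^{-2}+O(1)$, and computes the integral of each monomial in $\omega_2-\phi_1=-\tfrac{1}{2}t^{-2}-\tfrac12+\xi+2\xi t-\tfrac{3}{2}\xi^2 t^2$ termwise; the $O(1)$ pieces $-1$, $4$, $-3$ from the second, fourth, and fifth terms cancel exactly, leaving $2\xi+O(\xi^2)$.

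There is no real obstacle beyond careful bookkeeping: the subtle point is only that both factors $2\xi$ and $t^{-2}$ on $[t_3,t_4]$ are individually of size $\xi$ there, and the $\xi^{1/2}$-terms emerge from the evaluations of the antiderivatives at the endpoint $t_3=\xi^{-1/2}$. Collecting all pieces and doubling produces exactly $4-\tfrac{16}{3}\sqrt{\xi}+O(\xi)$, with an absolute implicit constant.
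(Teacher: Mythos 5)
Your computation is correct: all six pieces are handled properly, the identities $t_4^{-1}=\tfrac12\bigl(1-\sqrt{1-4\xi}\bigr)=\xi+O(\xi^2)$, $t_5-t_4=2+O(\xi^2)$, $t_5^2-t_4^2=4\xi^{-1}+O(\xi)$, $t_5^3-t_4^3=6\xi^{-2}+O(1)$ are right, the cancellation $-1+4-3=0$ on $[t_4,t_5]$ is genuine, and the two surviving main terms $\tfrac13\xi^{1/2}+O(\xi^2)$ on $[t_2,t_3]$ and $2-3\xi^{1/2}+O(\xi)$ on $[t_3,t_4]$ double to the asserted $4-\tfrac{16}{3}\sqrt{\xi}+O(\xi)$. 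Note, however, that the present paper does not prove this statement itself: it is quoted from \cite{Koleda2016}, with only the explicit density of Theorem~\ref{thm-omega2} reproduced here. So your argument is best compared with the paper's treatment of the general case $n\ge 3$ (Theorem~\ref{thm-idiff}): there the author does not integrate the exact density $\omega_n$ piecewise (no closed formula is available for $n\ge3$), but first replaces $\delta_n=\omega_n-\phi_{n-1}$ by the simplified profile $\widetilde\delta_n$ with $L^1$-error $O(\xi)$ (Lemma~\ref{lm-delta2}) and then integrates $\widetilde\delta_n$ over the three ranges cut at $\xi^{-1/2}$ and $\xi^{-1}$. Your route is the fully explicit quadratic analogue of that: you exploit the exact formula, which exists only for $n=2$, and in exchange you must track the extra transition zones $[t_1,t_2]$ and $[t_4,t_5]$ by hand and verify they only contribute $O(\xi)$ — exactly the bookkeeping that the paper's $\widetilde\delta_n$ device is designed to hide. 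Both approaches locate the source of the $-\tfrac{16}{3}\sqrt{\xi}$ term in the same place: the crossover near $|t|\approx\xi^{-1/2}$ between the $\xi^2t^2$ regime and the plateau $2\xi$, together with the plateau's endpoint near $|t|\approx\xi^{-1}$.
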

As the reader will see from the arguments below, this appeared to be a general feature of real algebraic integers of any degree $n\ge 2$.

Concerning the total number of real quadratic algebraic integers, its asympotics can be obtained in a more precise form than in case $n\ge 3$.
\begin{theorem}[\cite{Koleda2016}]\label{thm-total-2}
For integer $Q\ge 4$
\begin{equation*}
\frac{\Omega_2(Q,\mathbb{R})}{4Q^2} = 2-\frac4{3\sqrt{Q}}
- \frac{\ln Q}{Q}  + \frac{2(1-\gamma)}{Q} + O\!\left(\frac1{Q\sqrt{Q}}\right),
\end{equation*}
where $\gamma = 0{,}5772\dots$ is Euler's constant; the implicit big-O-constant is absolute.
\end{theorem}

Note that
$2=2^{-2} \int_{\mathbb{R}}\phi_1(t)\,dt + 1$.

\subsection{Algebraic integers of higher degrees}\label{sec-ndeg}

Here we will estimate the difference $\omega_n(\xi, t)-\phi_{n-1}(t)$ for $n\ge 3$. As a result, we deduce Theorems \ref{thm-main} and \ref{thm-total}.

So, in this section, $n\ge 3$; and we assume, without loss of generality, $t\ge 0$ (since $\omega_n(\xi,t)$ and $\phi_{n-1}(t)$ are both even functions of $t$).

For the sake of simplification, introduce the following notation
\begin{align*}
\mathbf{p} &:= (p_1,\dots,p_{n-1}), \quad d\mathbf{p} := dp_1\,dp_2\dots dp_{n-1},\\
\mathbf{w}(t) &:= \left(t, t^2, \dots, t^{n-1}\right), \\
\mathbf{v}(t) &:= \left(1, 2t, \dots, (n-1)t^{n-2}\right) = \frac{d}{dt} \mathbf{w}(t).
\end{align*}

In this notation, the function $\phi_{n-1}(t)$ takes the form:
\[
\phi_{n-1}(t) = \int\limits_{G_{n-1}(t)} \left|\mathbf{v}(t)\cdot \mathbf{p}\right|\,d\mathbf{p}, \qquad t \in \mathbb{R},
\]
where
\begin{equation}\label{eq-G-def'}
G_{n-1}(t) = \left\{\mathbf{p} \in \mathbb{R}^{n-1} : \|\mathbf{p}\|_\infty \le 1, \ |\mathbf{w}(t)\cdot \mathbf{p}| \le 1 \right\}.
\end{equation}
Note that $G_{n-1}(t)$ differs from $D_n(\xi,t)$ in the absence of term $\xi t^n$ in the modulus brackets.

Change the variables in the integral for $\omega_n(\xi,t)$:
\[
\begin{cases}
p_i = q_i, & i=1,\dots,n-2,\\
p_{n-1} = q_{n-1} - \xi t.
\end{cases}
\]
Then the integral takes the form:
\[
\omega_n(\xi,t) = \int\limits_{S_n(\xi,t)} \left|\xi t^{n-1} + \sum_{k=1}^{n-1} k q_k t^{k-1}\right|\,dq_1\ldots\,dq_{n-1}, \qquad t \in \mathbb{R},
\]
where
\[
S_n(\xi,t) = \left\{(q_1,\ldots,q_{n-1}) \in \mathbb{R}^{n-1} :
\max\limits_{1\le i \le n-2}|q_i| \le 1, \
\begin{array}{l}
|q_{n-1}-\xi t| \le 1, \\
|q_{n-1} t^{n-1} + \ldots + q_1 t| \le 1
\end{array}
\right\}.
\]
Now, we partition the integral into the three summands
\begin{multline*}
\omega_n(\xi,t) = \int\limits_{G_{n-1}(t)} \left|\xi t^{n-1} + \mathbf{v}(t) \mathbf{q}\right|\,d\mathbf{q} \ +\\
+
\int\limits_{S_n^+(\xi,t)} \left|\xi t^{n-1} + \mathbf{v}(t) \mathbf{q}\right|\,d\mathbf{q} \  -
\int\limits_{S_n^-(\xi,t)} \left|\xi t^{n-1} + \mathbf{v}(t) \mathbf{q}\right|\,d\mathbf{q},
\end{multline*}
where $G_n(t)$ is defined in \eqref{eq-G-def}, and
\begin{align*}
S_n^+(\xi,t) &= \left\{ \mathbf{q} \in \mathbb{R}^{n-1} :
|\mathbf{w}(t) \mathbf{q}| \le 1, \
\max\limits_{1\le i \le n-2}|q_i| \le 1, \
\phantom{-}1 < q_{n-1} \le 1+\xi t
\right\},\\
S_n^-(\xi,t) &= \left\{ \mathbf{q} \in \mathbb{R}^{n-1} :
|\mathbf{w}(t) \mathbf{q}| \le 1, \
\max\limits_{1\le i \le n-2}|q_i| \le 1, \
-1 \le q_{n-1} < - 1+\xi t
\right\}.
\end{align*}
For convenience, denote the integral over $G_{n-1}(t)$ by $J_1$, that one over $S_n^+(\xi,t)$ by $J_2$, and that one over $S_n^-(\xi,t)$ by $J_3$. So
\[
\omega_n(\xi,t) = J_1 + J_2 - J_3.
\]

Now we will separately estimate $J_1-\phi_{n-1}(t)$ (the ``main'' part) and $J_2 - J_3$ (the ``corner'' part).

\subsubsection{Estimation of the main part of the difference}

Evaluating the difference $J_1 - \phi_{n-1}(t)$ is based on the following lemma.

\begin{lemma}\label{lm-diff}
Let $V\subset\mathbb{R}^k$ be a bounded region symmetric with respect to the origin of coordinates. Let $\mathbf{v} = (v_1,\dots,v_k)$ be a fixed nonzero vector, and let $\epsilon > 0$ be a real number.
Then
\[
\int\limits_V |\mathbf{v}\cdot\mathbf{x} + \epsilon|\,d\mathbf{x} -
 \int\limits_V |\mathbf{v}\cdot\mathbf{x}|\,d\mathbf{x} \ =
\int\limits_{V(\epsilon)} \left(\epsilon - |\mathbf{v}\cdot\mathbf{x}|\right)\,d\mathbf{x}.
\]
where
$V(\epsilon) := \left\{\mathbf{x}\in V : |\mathbf{v}\cdot\mathbf{x}| < \epsilon\right\}$.
\end{lemma}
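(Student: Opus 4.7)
\smallskip

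My plan is to exploit the symmetry of $V$ to reduce everything to a one-variable pointwise identity applied to $u = \mathbf{v}\cdot\mathbf{x}$. First I would note that the substitution $\mathbf{x} \mapsto -\mathbf{x}$, together with the central symmetry of $V$, gives
\[
\int_V |\mathbf{v}\cdot\mathbf{x} + \epsilon|\,d\mathbf{x} = \int_V |\mathbf{v}\cdot\mathbf{x} - \epsilon|\,d\mathbf{x},
\]
so that the left-hand side equals the average $\frac{1}{2}\int_V \bigl(|\mathbf{v}\cdot\mathbf{x} + \epsilon| + |\mathbf{v}\cdot\mathbf{x} - \epsilon|\bigr)\,d\mathbf{x}$. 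This symmetrization is the only place where the hypothesis on $V$ enters.

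Next, I would invoke the elementary pointwise identity valid for every real number $u$:
\[
|u + \epsilon| + |u - \epsilon| - 2|u| = 2\max\bigl(\epsilon - |u|,\,0\bigr),
\]
which one checks by splitting into the cases $|u| \ge \epsilon$ and $|u| < \epsilon$. Setting $u = \mathbf{v}\cdot\mathbf{x}$ and integrating, the contribution is supported exactly on $V(\epsilon) = \{\mathbf{x}\in V : |\mathbf{v}\cdot\mathbf{x}|<\epsilon\}$, and equals $2(\epsilon - |\mathbf{v}\cdot\mathbf{x}|)$ there.

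Combining these two steps yields
\[
2\int_V |\mathbf{v}\cdot\mathbf{x} + \epsilon|\,d\mathbf{x} - 2\int_V |\mathbf{v}\cdot\mathbf{x}|\,d\mathbf{x} = \int_{V(\epsilon)} 2\bigl(\epsilon - |\mathbf{v}\cdot\mathbf{x}|\bigr)\,d\mathbf{x},
\]
and dividing by $2$ gives the claim. There is no real obstacle; the only thing to watch is that the symmetrization step genuinely needs $V$ to be centrally symmetric (so that the change of variables preserves the domain), and that the pointwise identity is applied under the integral sign, which is legitimate since the integrand is bounded and $V$ has finite measure.
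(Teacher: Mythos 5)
Your proof is correct and follows essentially the same route as the paper's: both exploit the central symmetry of $V$ via $\mathbf{x}\mapsto -\mathbf{x}$ to equate the integral with the one using $-\epsilon$, then symmetrize and apply the equivalent pointwise identity $\tfrac{1}{2}\bigl(|u+\epsilon|+|u-\epsilon|\bigr)=\max(|u|,\epsilon)$ inside the integral. No differences worth noting.
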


Remark 1. Obviously, if $V(\epsilon)=V$, then
\begin{equation}\label{eq-V-int}
\int\limits_V |\mathbf{v}\cdot\mathbf{x} + \epsilon|\,d\mathbf{x} = \epsilon \cdot \mes_k V.
\end{equation}

Remark 2. It is easy to observe that for any $0<\lambda<1$
\[
(1-\lambda)\,\epsilon \cdot \mes_k V(\lambda\,\epsilon) \ \le \
 \int\limits_V |\mathbf{v}\cdot\mathbf{x} + \epsilon|\,d\mathbf{x} -
 \int\limits_V |\mathbf{v}\cdot\mathbf{x}|\,d\mathbf{x} \ \le \ \epsilon \cdot \mes_k V(\epsilon),
\]

\begin{proof}
Changing $\mathbf{x}$ for $-\mathbf{x}$ in the integral, after transformation, we obtain
\[
\int\limits_V |\mathbf{v}\cdot\mathbf{x} + \epsilon|\,d\mathbf{x} = \int\limits_V |\mathbf{v}\cdot\mathbf{x} - \epsilon|\,d\mathbf{x} =
\int\limits_V \frac{|\mathbf{v}\cdot\mathbf{x} + \epsilon|+|\mathbf{v}\cdot\mathbf{x} - \epsilon|}{2}\,d\mathbf{x}.
\]
Since
\[
\frac{|\mathbf{v}\cdot\mathbf{x} + \epsilon|+|\mathbf{v}\cdot\mathbf{x} - \epsilon|}{2} =
\begin{cases}
|\mathbf{v}\cdot\mathbf{x}|, & |\mathbf{v}\cdot\mathbf{x}| \ge \epsilon,\\
\epsilon, & |\mathbf{v}\cdot\mathbf{x}| < \epsilon,
\end{cases}
\]
the lemma follows immediately.
\end{proof}

From Lemma \ref{lm-diff}, we have
\begin{equation}\label{eq-J-phi-diff}
J_1 - \phi_{n-1}(t) = \int\limits_{U_n(\xi,t)} \left(\xi t^{n-1} - |\mathbf{v}(t) \mathbf{q}|\right) d\mathbf{q},
\end{equation}
where
\begin{equation}\label{eq-U-def}
U_n(\xi,t) = \left\{\mathbf{q}\in\mathbb{R}^{n-1} : \|\mathbf{q}\|_\infty \le 1, \ |\mathbf{w}(t) \mathbf{q}|\le 1, \ |\mathbf{v}(t) \mathbf{q}| \le \xi t^{n-1}\right\}.
\end{equation}
Note that $U_n(\xi,t)$ is a $(n-1)$--dimensional region defined by $n+1$ linear inequalities and, therefore, it is a $(n-1)$--dimensional polyhedron.

It is easy to check that if we take any $n-1$ inequalities in \eqref{eq-U-def}, then their left-hand sides are linearly independent as linear functions of $q_1,\dots,q_{n-1}$ for any real $t\ne 0$.

\begin{lemma}\label{lm-I1}
Let $n\ge 3$ be a fixed integer, and $0<\xi<1$.

A) For $t\in[t_1,t_2]$, where the parameters $t_1$ and $t_2$ have the asymptotics:
\[
t_1=2+\sqrt{2} + O(\xi), \qquad
t_2 = \xi^{-1/2} - 1 + O(\xi^{1/2}), 
\qquad \xi\to 0,
\]
with absolute big-O-constants,
$U_n(\xi,t)$ is an $(n-1)$--dimensional parallelepiped defined by:
\begin{equation}\label{eq-U2}
U_n(\xi,t) = \left\{\mathbf{q}\in\mathbb{R}^{n-1} : \max_{1\le k\le n-3} |q_k|\le 1, \ |\mathbf{w}(t) \mathbf{q}|\le 1, \ |\mathbf{v}(t) \mathbf{q}| \le \xi t^{n-1}\right\}.
\end{equation}

B) For all
\begin{equation}\label{eq-t-restr}
|t|\ge t_3 := \xi^{-1/2} + 1,
\end{equation}
$U_n(\xi,t)$ doesn't depend on $\xi$ and is a parallelepiped defined by:
\begin{equation}\label{eq-U3}
U_n(\xi,t) = G_{n-1}(t) = \left\{\mathbf{q}\in\mathbb{R}^{n-1} : \max_{1\le k\le n-2} |q_k|\le 1, \ |\mathbf{w}(t) \mathbf{q}|\le 1 \right\}.
\end{equation}
\end{lemma}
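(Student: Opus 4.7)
Both parts say that certain of the constraints defining $U_n(\xi,t)$ in \eqref{eq-U-def} are redundant in the stated $t$-range. My approach in both parts is the same: use an algebraic identity to express the ``missing'' constraints as linear combinations of the ones retained, and then bound.

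For Part B, I would use the identity
\[
\mathbf{v}(t)\mathbf{q} \;=\; \frac{n-1}{t}\,\mathbf{w}(t)\mathbf{q} \;-\; \sum_{k=1}^{n-2}(n-1-k)\,q_k\,t^{k-1},
\]
obtained from $tP'(t)-(n-1)P(t)$ with $P(x)=\sum_{k=1}^{n-1}q_k x^k$. Combined with $|\mathbf{w}(t)\mathbf{q}|\le 1$ and $|q_k|\le 1$ for $k\le n-2$ this gives $|t|\cdot|\mathbf{v}(t)\mathbf{q}|\le \sum_{k=0}^{n-2}(n-1-k)|t|^k$. Collapsing the sum via the closed form
\[
\sum_{k=0}^{N}(N+1-k)\,x^k \;=\; \frac{x^{N+2}-(N+2)x+(N+1)}{(x-1)^2}
\]
(a consequence of differentiating $1+x+\cdots+x^{N+1}$), one obtains $|\mathbf{v}(t)\mathbf{q}|\le \frac{|t|^n-n|t|+(n-1)}{|t|(|t|-1)^2}\le \frac{|t|^{n-1}}{(|t|-1)^2}$. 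Hence $|\mathbf{v}(t)\mathbf{q}|\le \xi|t|^{n-1}$ follows from $(|t|-1)^2\ge 1/\xi$, which is exactly $|t|\ge t_3$. The redundancy of $|q_{n-1}|\le 1$ comes from a parallel geometric-series estimate: from $|\mathbf{w}(t)\mathbf{q}|\le 1$ and the box constraints, $|q_{n-1}||t|^{n-1}\le 1+|t|+\cdots+|t|^{n-2}$, so $|q_{n-1}|\le(1-|t|^{-(n-1)})/(|t|-1)\le 1$ whenever $|t|\ge 2$, in particular for $|t|\ge t_3$.

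For Part A, the region \eqref{eq-U2} is manifestly an $(n-1)$-dimensional parallelepiped: its $n-1$ bounding pairs have normals $e_1,\dots,e_{n-3},\mathbf{w}(t),\mathbf{v}(t)$, and the last two coordinates of $\mathbf{w}(t)$ and $\mathbf{v}(t)$ form a $2\times 2$ matrix with determinant $t^{2n-4}\ne 0$. To identify this parallelepiped with $U_n(\xi,t)$ from \eqref{eq-U-def} it suffices to show $|q_{n-2}|\le 1$ and $|q_{n-1}|\le 1$ throughout \eqref{eq-U2}. I would solve the $2\times 2$ system in $(q_{n-1},q_{n-2})$ coming from $\mathbf{w}(t)\mathbf{q}$ and $\mathbf{v}(t)\mathbf{q}$ by Cramer's rule (determinant $-t^{2n-4}$), expressing $q_{n-1},q_{n-2}$ linearly in the slack variables $\mathbf{w}(t)\mathbf{q},\mathbf{v}(t)\mathbf{q},q_1,\dots,q_{n-3}$, and then maximise these linear functions over the slab-box. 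The optimum is attained at a vertex, and a second application of the summation identity above yields
\[
\max|q_{n-1}| = \xi t + \frac{t^{n-1}-(n-1)t+(n-2)}{t^{n-1}(t-1)^2},\qquad
\max|q_{n-2}| = \xi t^2 + \frac{t^n-nt+(n-1)}{t^{n-2}(t-1)^2}-1.
\]

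The binding restriction is $\max|q_{n-2}|\le 1$. Rewriting $\frac{t^n-nt+(n-1)}{t^{n-2}(t-1)^2} = \frac{t^2}{(t-1)^2}-\frac{nt-(n-1)}{t^{n-2}(t-1)^2}$ shows that for $t>1$ the finite-$n$ extremum lies below its $n\to\infty$ limit $\xi t^2 + (2t-1)/(t-1)^2$. The limiting inequality rearranges to $\xi t^2(t-1)^2\le t^2-4t+2$, whose positive roots in $t$ have the asymptotics $t_1=2+\sqrt 2+O(\xi)$ and $t_2=\xi^{-1/2}-1+O(\xi^{1/2})$; these bounds therefore apply uniformly in $n\ge 3$. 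The companion inequality $\max|q_{n-1}|\le 1$ is strictly weaker (its $n\to\infty$ form $\xi t + 1/(t-1)^2\le 1$ forces only $t\ge 2+O(\xi)$ from below and $t\le \xi^{-1}+O(1)$ from above), so it does not tighten the range. The main obstacle is the closed-form summation identity above, without which the explicit constant $2+\sqrt 2$ (a root of $t^2-4t+2=0$) would be opaque; a secondary subtlety is tracking the sign of the finite-$n$ correction $-\frac{nt-(n-1)}{t^{n-2}(t-1)^2}$ to guarantee that the stated $t_1,t_2$ work uniformly in $n$.
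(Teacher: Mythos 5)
Your proposal is correct and takes essentially the same approach as the paper: both proofs identify which of the defining inequalities of $U_n(\xi,t)$ are redundant by expressing the candidate constraint as a linear combination of the retained ones, bound uniformly in $n$, and arrive at the same threshold conditions $\xi t^2+t^2/(t-1)^2\le 2$ (Part A) and $(t-1)^{-2}\le\xi$ (Part B). The only differences are presentational — you use the identity $tP'(t)-(n-1)P(t)$, Cramer's rule, and a closed-form summation, where the paper solves the $|\mathbf{w}(t)\mathbf{q}|\le 1$ constraint for $q_{n-1}$, substitutes, and tail-bounds the resulting geometric-type series; also note your $2\times2$ determinant is $+t^{2n-4}$, not $-t^{2n-4}$, though this sign is immaterial.
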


Remark. Note that $t_1$, $t_2$ and $t_3$ do not depend on $n$.

\begin{proof}
The overall idea of the proof is to find needless inequalities among the ones defining $U_n(\xi,t)$, that is, such inequalities that can be dropped from the definition of $U_n(\xi,t)$ without any effect.
Everywhere in the proof we assume $t>1$.

First of all, we rewrite the inequality $|\mathbf{w}(t) \mathbf{q}|\le 1$ in the form
\begin{equation}\label{eq-q-n-1}
q_{n-1} = - \sum_{i=2}^{n-1} \frac{q_{n-i}}{t^{i-1}} - \frac{\theta}{t^{n-1}}, \qquad |\theta|\le 1.
\end{equation}
Hence, we obviously have
\[
|q_{n-1}| \le \sum_{i=1}^{n-1} t^{-i} < \frac{1}{t-1}.
\]
Thus for any $t\ge 2$ the inequality $|q_{n-1}|\le 1$ follows from \eqref{eq-q-n-1}. Therefore, we can drop it.

At the next step, putting \eqref{eq-q-n-1} into the inequality $|\mathbf{v}(t) \mathbf{q}|\le \xi t^{n-1}$, we get after transformation:
\begin{equation}\label{eq-q-n-2}
\left|\sum_{i=2}^{n-1} \frac{(i-1)q_{n-i}}{t^{i-2}} + \frac{(n-1)\theta}{t^{n-2}}\right| \le \xi t^2.
\end{equation}
Now, the struggle will be between the inequalities \eqref{eq-q-n-2} and $|q_{n-2}|\le 1$: which one of them is ``stronger'' and will be allowed to stay.

\textbf{Part A.}
From \eqref{eq-q-n-2} we obtain
\[
|q_{n-2}| \le \xi t^2 + \sum_{k=2}^{n-1} k t^{-k+1} < \xi t^2 + \frac{t^2}{(t-1)^2} - 1.
\]
Therefore if $t$ satisfies the inequality
\begin{equation}\label{eq-t-U2}
\xi t^2 + \frac{t^2}{(t-1)^2} - 1 \le 1,
\end{equation}
then the inequality $|q_{n-2}|\le 1$ is needless.

After some meditation on \eqref{eq-t-U2} one can comprehend that for $t>1$ and sufficiently small $\xi$ the solutions of \eqref{eq-t-U2} form an interval $[t_1, t_2]$, where $t_1$ and $t_2$ are the solutions of
\begin{equation}\label{eq-t12}
\xi t^2 + t^2 (t-1)^{-2} = 2
\end{equation}
for $t>1$, and as $\xi\to 0$
\[
t_1 = t_1(\xi) = 2+\sqrt{2} + O(\xi),
\qquad t_2 = t_2(\xi) = \xi^{-1/2} - 1 + O(\xi^{1/2}), 
\]
with the absolute implied big-O-constants.
Let us explain this revelation in details. It is easy to check (using the derivative) that for $t>1$ the function $\xi t^2 + t^2 (t-1)^{-2}$ has the only minimum at $t=\xi^{-1/3}+1$ (everywhere else the derivative has non-zero values, and therefore the function is monotonic), and this minimum is equal to $\xi^{1/3} (1+\xi^{1/3})^2$, which is less than 2 for small $\xi$.
Hence, one has $t_1< \xi^{-1/3}+1<t_2$. Thus, obviously,
\[
\lim_{\xi\to 0} \xi t_1^2 = 0, \qquad \lim_{\xi\to 0} t_2 = +\infty.
\]
Placing this into \eqref{eq-t12} shows that
\[
\frac{t_1(0)}{t_1(0)-1} = \sqrt{2}, \qquad \lim_{\xi\to 0} \xi t_2^2 = 1,
\]
where $t_1(0) := \lim_{\xi\to 0} t_1$. So from \eqref{eq-t12} one immediately has
\[
t_1^2 (t_1-1)^{-2} = 2 + O(\xi), \qquad \xi t_2^2 = 1 + O(\xi^{1/2}).
\]
Note that $t_1$ monotonously decreases, and $t_2$ monotonously increases as $\xi\to 0$.

One can recover the development of $t_2$ by degrees of $\tilde{\xi} := \xi^{1/2}$ putting the expression with undetermined coefficients
\[
t_2 = \tilde\xi^{-1} + c_0 + c_1 \tilde\xi + \dots
\]
into \eqref{eq-t12} and solving this with respect to the unknown coefficients $c_i$.

Note that $t_2$ is an analytic (and algebraic) function of $\tilde\xi$ in a punctured neighbourhood of $\tilde\xi=0$ and has a simple pole at $\tilde\xi=0$. Therefore the aforementioned development does exist.
At the same time, $t_1$ is an analytic function of $\xi$ in a neighbourhood of $\xi=0$. Hence, it is possible to find a development of $t_1$ in the form $t_1 = d_0 + d_1 \xi + d_2 \xi^2 + \dots$.
Notice that $t_2$ is represented by a series in terms of $\xi^{1/2}$ rather than $\xi$, in contrast to $t_1$.

So Part A is proved now, and from \eqref{eq-U-def} we have \eqref{eq-U2}.

\textbf{Part B.}
Now, since $|q_k|\le 1$, for $1\le k\le n-1$, and $|\theta|\le 1$, the left-hand side of \eqref{eq-q-n-2} can be estimated as:
\[
\left|\sum_{i=2}^{n-1} \frac{(i-1)q_{n-i}}{t^{i-2}} + \frac{(n-1)\theta}{t^{n-2}}\right| \le
\sum_{k=1}^{n-1} k t^{-k+1} < \frac{t^2}{(t-1)^2}.
\]
Thus, obviously, if $t$ satisfies the inequality
\begin{equation}\label{eq-t3-ineq}
\frac{t^2}{(t-1)^2} \le \xi t^2,
\end{equation}
then \eqref{eq-q-n-2} surely loses to $|q_{n-2}|\le 1$, and so $|\mathbf{v}(t)\cdot \mathbf{q}|\le \xi t^{n-1}$ can be excluded without any effect.
And in fact, we have \eqref{eq-U3} (cf. \eqref{eq-G-def}, one can nominally append the inequality $|q_{n-1}|\le 1$).
Solving \eqref{eq-t3-ineq} we find that this happens when $t\ge \xi^{-1/2} + 1$.
\end{proof}

For subsequent calculations we need this lemma.
\begin{lemma}\label{lm-integr}
Let $A=(a_{ij})_{i,j=1}^d$ be an invertible real matrix, and let $\mathbf{a}_i = (a_{i1},\dots,a_{id})$ be its $i$-th row.
Let $V\subset\mathbb{R}^d$ be the set of all $\mathbf{x}=(x_1,\dots,x_d)^T\in\mathbb{R}^d$ satisfying the system of $d$~linear inequalities
\begin{equation}\label{eq-parall}
|\mathbf{a}_i\cdot \mathbf{x}|\le h_i, \qquad 1\le i\le d.
\end{equation}
Then for any integrable function $f:\mathbb{R}^d\to\mathbb{R}$
\[
\int_V f(\mathbf{x})\,d\mathbf{x} = \frac{1}{|\det A|} \int_{-h_d}^{h_d}\dots\int_{-h_2}^{h_2}\int_{-h_1}^{h_1} f(A^{-1}\mathbf{t}) dt_1 dt_2 \dots dt_d.
\]
\end{lemma}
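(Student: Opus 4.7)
The plan is to recognize Lemma \ref{lm-integr} as a direct application of the linear change of variables formula for multiple integrals, using the map $\mathbf{t} = A\mathbf{x}$.

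First I would observe that since $A$ is invertible, the linear map $\Phi:\mathbb{R}^d\to\mathbb{R}^d$ defined by $\Phi(\mathbf{x}) = A\mathbf{x}$ is a bijection with Jacobian $\det A$. In coordinates, the $i$-th component of $\Phi(\mathbf{x})$ is exactly $\mathbf{a}_i\cdot\mathbf{x}$, so the defining inequalities \eqref{eq-parall} of $V$ become simply $|t_i|\le h_i$ for $1\le i\le d$. Consequently, $\Phi(V)$ is precisely the axis-aligned box $[-h_1,h_1]\times\cdots\times[-h_d,h_d]$, and conversely the preimage under $\Phi$ of this box is $V$.

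Next I would apply the standard change of variables formula: writing $\mathbf{x} = A^{-1}\mathbf{t}$ gives $d\mathbf{x} = |\det A^{-1}|\,d\mathbf{t} = |\det A|^{-1}\,d\mathbf{t}$, and the integral transforms as
\[
\int_V f(\mathbf{x})\,d\mathbf{x} = \int_{\Phi(V)} f(A^{-1}\mathbf{t})\,|\det A|^{-1}\,d\mathbf{t}.
\]
Since $\Phi(V)$ is the product of intervals $[-h_i,h_i]$, Fubini's theorem lets me rewrite the right-hand side as the iterated integral claimed in the lemma.

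There is no real obstacle here: the only subtlety is checking that the boundary (a set of measure zero) causes no issue, and that the matching between the row $\mathbf{a}_i$ and the new coordinate $t_i$ is exactly what is needed to turn the $i$-th inequality in \eqref{eq-parall} into $|t_i|\le h_i$. Both points are immediate. Thus the statement follows in a couple of lines from the linear change-of-variables formula and Fubini.
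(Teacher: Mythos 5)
Your proposal is correct and uses essentially the same argument as the paper: the change of variables $\mathbf{t}=A\mathbf{x}$, whose Jacobian contributes the factor $|\det A|^{-1}$ and which turns the inequalities $|\mathbf{a}_i\cdot\mathbf{x}|\le h_i$ into the box conditions $|t_i|\le h_i$. The added remarks about Fubini and the measure-zero boundary are fine but not needed beyond what the paper already does.
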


Remark. We are intrested in the two following special cases:
\newline
a) when $f(\mathbf{x})=1$, then
\begin{equation}\label{eq-vol-par}
\mes_d V = \int_V d\mathbf{x} = \frac{2^d}{|\det A|}\, h_1 h_2 \dots h_d;
\end{equation}
b) when $f(\mathbf{x})=h_i-|\mathbf{a}_i\cdot \mathbf{x}|$, then
\begin{equation}\label{eq-lin-par}
\int_V (h_i - |\mathbf{a}_i\cdot \mathbf{x}|)\,d\mathbf{x} = \frac{2^{d-1} h_i}{|\det A|}\,h_1 h_2 \dots h_d.
\end{equation}

\begin{proof}
To calculate the integral, we make the change of variables:
$\mathbf{t} = A\mathbf{x}$.
The Jacobian is equal to
\[
\det \frac{\partial \mathbf{x}}{\partial \mathbf{t}} = \left(\det\frac{\partial \mathbf{t}}{\partial \mathbf{x}}\right)^{-1} = \left(\det A \right)^{-1}.
\]
The system \eqref{eq-parall} takes the form: $|t_i|\le h_i$ for $1\le i\le d$.
Hence, the lemma follows.
\end{proof}

\begin{lemma}\label{lm-J1-phi-diff}
Let $n\ge 3$ be a fixed integer, and $0<\xi<1$.

A) For $|t| \in [0, 1]$
\begin{equation*}\label{eq-est-1}
0\le J_1 - \phi_{n-1}(t) \le 2^{n-2} \xi^2 t^{2n-2}.
\end{equation*}

B) For $|t|\in[1, \xi^{-1/2}+1]$, the following inequality holds
\begin{equation*}\label{eq-est-2}
0< J_1 - \phi_{n-1}(t) \le 2^{n-2} \xi^2 t^2.
\end{equation*}
Moreover, if $|t|\in [t_1, t_2]$, where $t_1$ and $t_2$ are the same as in Lemma \ref{lm-I1}, then we have the equality
\[
J_1 - \phi_{n-1}(t) = 2^{n-2} \xi^2 t^2.
\]

C) For $|t|\ge \xi^{-1/2} + 1$, we have
\begin{equation*}
J_1 = 2^{n-1} \xi.
\end{equation*}
\end{lemma}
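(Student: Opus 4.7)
The plan is to start from the integral identity \eqref{eq-J-phi-diff} obtained via Lemma~\ref{lm-diff}, namely
\[
J_1 - \phi_{n-1}(t) = \int_{U_n(\xi,t)} \bigl(\xi t^{n-1} - |\mathbf{v}(t)\cdot\mathbf{q}|\bigr)\, d\mathbf{q},
\]
whose non-negativity is immediate since $|\mathbf{v}(t)\mathbf{q}|\le \xi t^{n-1}$ on $U_n(\xi,t)$; this secures the lower bounds in Parts A and B (with strict positivity in Part B coming from the fact that $U_n(\xi,t)$ contains a neighbourhood of the origin on which the integrand is strictly positive whenever $\xi t^{n-1}>0$). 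The upper bounds and the equalities will be obtained by either identifying $U_n(\xi,t)$ itself as a parallelepiped via Lemma~\ref{lm-I1} and applying \eqref{eq-lin-par} directly, or by embedding $U_n(\xi,t)$ in a suitable larger parallelepiped on which the integrand is still non-negative and the same formula applies.

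For Part C, Lemma~\ref{lm-I1}(B) gives $U_n(\xi,t) = G_{n-1}(t)$, so $|\mathbf{v}(t)\mathbf{q}|\le \xi t^{n-1}$ is automatic on $G_{n-1}(t)$, and Remark~1 after Lemma~\ref{lm-diff} (equation \eqref{eq-V-int}) yields $J_1 = \xi t^{n-1}\cdot\mes_{n-1} G_{n-1}(t)$. Writing $G_{n-1}(t)$ via \eqref{eq-U3} and applying \eqref{eq-vol-par} to the matrix with rows $\mathbf{e}_1,\dots,\mathbf{e}_{n-2},\mathbf{w}(t)$ (determinant $\pm t^{n-1}$) gives $\mes_{n-1} G_{n-1}(t) = 2^{n-1}|t|^{-(n-1)}$, whence $J_1 = 2^{n-1}\xi$. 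For the equality assertion in Part B on $[t_1,t_2]$, Lemma~\ref{lm-I1}(A) identifies $U_n(\xi,t)$ with the parallelepiped $P_t$ whose defining matrix has rows $\mathbf{e}_1,\dots,\mathbf{e}_{n-3},\mathbf{w}(t),\mathbf{v}(t)$; a block expansion gives $|\det A| = |t|^{2n-4}$, and \eqref{eq-lin-par} applied with $\mathbf{a}_i = \mathbf{v}(t)$, $h_i = \xi t^{n-1}$ evaluates exactly to $2^{n-2}\xi^2 t^2$. The general upper bound in Part B across the full range $|t|\in [1,\xi^{-1/2}+1]$ then follows from the inclusion $U_n(\xi,t)\subseteq P_t$: the defining inequalities of $P_t$ form a subset of those of $U_n(\xi,t)$, the integrand $\xi t^{n-1} - |\mathbf{v}(t)\mathbf{q}|$ remains non-negative throughout $P_t$ because $P_t$ retains the bound $|\mathbf{v}(t)\mathbf{q}|\le \xi t^{n-1}$, and the same calculation applies.

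For Part A the target $2^{n-2}\xi^2 t^{2n-2}$ suggests a different enclosing parallelepiped $P'_t$, defined by $|q_k|\le 1$ for $k=2,\dots,n-1$ together with $|\mathbf{v}(t)\mathbf{q}|\le \xi t^{n-1}$, i.e.\ obtained from $U_n(\xi,t)$ by dropping $|q_1|\le 1$ and $|\mathbf{w}(t)\mathbf{q}|\le 1$. The matrix with rows $\mathbf{e}_2,\dots,\mathbf{e}_{n-1},\mathbf{v}(t)$ has determinant $\pm 1$ (expand along the first column: only the leading entry $1$ of $\mathbf{v}(t)$ contributes), so \eqref{eq-lin-par} produces exactly $2^{n-2}\xi^2 t^{2n-2}$. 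Since the integrand is non-negative on $P'_t$ and $U_n(\xi,t)\subseteq P'_t$, this is the required upper bound.

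The main trick, and the only real content of the argument, is selecting in each regime which of the $n+1$ defining inequalities of $U_n(\xi,t)$ to retain so that the resulting $(n-1)\times(n-1)$ matrix has a determinant producing the correct power of $t$: keeping $\mathbf{w}(t)$ with $\mathbf{e}_1,\dots,\mathbf{e}_{n-2}$ gives $|t|^{n-1}$ (Part~C), keeping both $\mathbf{w}(t)$ and $\mathbf{v}(t)$ with $\mathbf{e}_1,\dots,\mathbf{e}_{n-3}$ gives $|t|^{2n-4}$ (Part~B), and keeping $\mathbf{v}(t)$ with $\mathbf{e}_2,\dots,\mathbf{e}_{n-1}$ gives $1$ (Part~A). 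Once these three configurations are chosen, Lemma~\ref{lm-integr} does the rest.
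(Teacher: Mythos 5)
Your proposal is correct and follows essentially the same route as the paper: in each regime you retain exactly the same $n-1$ inequalities (the rows $\mathbf{e}_2,\dots,\mathbf{e}_{n-1},\mathbf{v}(t)$ for Part A, $\mathbf{e}_1,\dots,\mathbf{e}_{n-3},\mathbf{w}(t),\mathbf{v}(t)$ for Part B, and $\mathbf{e}_1,\dots,\mathbf{e}_{n-2},\mathbf{w}(t)$ for Part C), compute the same determinants, and apply \eqref{eq-lin-par} or \eqref{eq-V-int} to the enclosing parallelepiped while invoking Lemma~\ref{lm-I1} for the equality cases, exactly as the paper does.
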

\begin{proof}
The idea of the proof is to replace $U_{n-1}(\xi,t)$ by a parallelepiped, which includes $U_{n-1}(\xi,t)$, and then apply Lemma \ref{lm-integr}. Since $U_{n-1}(\xi,t)$ is an $(n-1)$--dimensional region, the easiest way to do this is to discard two of the $n+1$ linear inequalities defining $U_{n-1}(\xi,t)$.

\textbf{Part A}. To prove this part we discard the two inequalities: $|q_1|\le 1$ and $|\mathbf{w}(t)\cdot \mathbf{q}|\le 1$. Now we have
\[
U_{n}(\xi,t) \subseteq W_1 :=
\left\{\mathbf{q}\in\mathbb{R}^{n-1} : \max_{2\le k\le n-1} |q_k|\le 1, \ |\mathbf{v}(t) \mathbf{q}| \le \xi t^{n-1}\right\}.
\]
Now to estimate the difference $J_1 - \phi_{n-1}(t)$ from above, we replace $U_n(\xi,t)$ by $W_1$ in \eqref{eq-J-phi-diff}:
\[
J_1 - \phi_{n-1}(t) \le \int\limits_{W_1} (\xi t^{n-1} - |\mathbf{v}(t) \mathbf{q}|)\, d\mathbf{q}.
\]
Now we obtain the estimate of Part A, applying \eqref{eq-lin-par} with
\[
d = n-1, \quad
A = \left(\begin{matrix}
1 & \dots\\
0 & E_{n-2}
\end{matrix}\right),
\quad h_1 = \xi t^{n-1}, \quad h_i = 1, \quad 2\le i\le n-1,
\]
where $E_k$ is the identity matrix of size $k$.

\medskip

\textbf{Part B.}
Now we eliminate the restrictions $|q_{n-1}|\le 1$ and $|q_{n-2}|\le 1$ and obtain
\[
U_{n}(\xi,t) \subseteq W_2 :=
\left\{\mathbf{q}\in\mathbb{R}^{n-1} : \max_{1\le k\le n-3} |q_k|\le 1, \ |\mathbf{w}(t) \mathbf{q}|\le 1, \ |\mathbf{v}(t) \mathbf{q}| \le \xi t^{n-1}\right\}.
\]
Note that by Lemma \ref{lm-I1} we have $U_{n}(\xi,t) = W_2$ for $t_1\le t\le t_2$.

Just like in Part A, we estimate
\[
J_1 - \phi_{n-1}(t) \le \int\limits_{W_2} (\xi t^{n-1} - |\mathbf{v}(t) \mathbf{q}|)\, d\mathbf{q}.
\]
using \eqref{eq-lin-par} with $d=n-1$, $h_i=1$ for $1\le i\le n-2$, $h_{n-1}=\xi t^{n-1}$, and
\[
A = \left(\begin{matrix}
E_{n-3} & 0 & 0\\
\dots & t^{n-2} & t^{n-1}\\
\dots & (n-2)t^{n-3} & (n-1) t^{n-2}
\end{matrix}\right).
\]
Note that for $n=3$ this is just a $2\times 2$-matrix; the calculation still holds true.

\medskip

\textbf{Part C.}
From Lemma \ref{lm-I1}, applying \eqref{eq-V-int} to $J_1$ with $V = G_{n-1}(t)$, we have
\[
J_1 = \xi t^{n-1} \mes_{n-1} G_{n-1}(t).
\]
Using \eqref{eq-vol-par} with $d=n-1$, $h_i=1$ for $1\le i\le n-1$, and
\[
A = \left(\begin{matrix}
E_{n-2} & 0\\
\dots & t^{n-1}
\end{matrix}\right),
\]
one finds $\mes_{n-1} G_{n-1}(t) = 2^{n-1} t^{-n+1}$.
The lemma is proved.
\end{proof}

\subsubsection{Estimation of the ``corner'' part}

The aim of this section is to prove the following statement.
\begin{lemma}\label{lm-J2-J3-diff}
For sufficiently small $\xi>0$, there exist computable values $\widetilde{t}_1$, $\widetilde{t}_2$ and $\widetilde{t}_3$ independent of $n$ and satisfying the asymptotics
\[
\widetilde{t}_1 = 2 + 2\xi + O(\xi), \qquad \widetilde{t}_2 = \xi^{-1} - 1 + O(\xi), \qquad \widetilde{t}_3 = \xi^{-1} + 1,
\]
such that the following holds:

a) for $t\le \widetilde{t}_1$
\[
|J_2 - J_3| \ll_n \xi^2 t^n;
\]

b) for $\widetilde{t}_1 \le t < \widetilde{t}_2$
\[
J_2 = J_3 = 0;
\]

c) for $t\ge \widetilde{t}_2$
\[
J_2 = 0, \quad 0\le J_3 \le J_1;
\]

d) for $t>\widetilde{t}_3$, we have
$J_1 = J_3$ and so $\omega_n(\xi,t) = 0$.
\end{lemma}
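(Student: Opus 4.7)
The plan is to analyze the regions $S_n^\pm(\xi,t)$ geometrically, dispatching parts (b), (c), (d) as direct corollaries of when these slabs are empty or coincide with $G_{n-1}(t)$, and then to handle part (a) via a symmetrizing substitution exposing cancellation between $J_2$ and $J_3$. The central one-variable function is
\[
f(t) := \frac{1 + t + \ldots + t^{n-2}}{t^{n-1}} = \frac{1 - t^{-(n-1)}}{t - 1},
\]
which decreases monotonically from $n-1$ at $t = 1$ to $0$ as $t \to \infty$. Optimizing $\sum_{i=1}^{n-2} q_i t^i$ over $|q_i| \le 1$ shows that, given $q_{n-1}$, the constraint $|\mathbf{w}(t) \cdot \mathbf{q}| \le 1$ together with $|q_i| \le 1$ for $i \le n-2$ is satisfiable precisely when $|q_{n-1}| \le f(t)$. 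Hence $S_n^+$ (which demands $q_{n-1} > 1$) is nonempty iff $f(t) > 1$, a condition failing for $t > t^*_n$ with $t^*_n < 2$ for every $n \ge 3$; and $S_n^-$ (with $q_{n-1} \in [-1, \xi t - 1)$) is nonempty iff either $f(t) \ge 1$ or $\xi t + f(t) > 1$.

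To identify the thresholds, note that $f(t) \le 1/(t-1)$ with $n$-independent error on $t \ge 2$, so the equation $\xi t + 1/(t-1) = 1$ has roots $2 + 2\xi + O(\xi^2)$ and $\xi^{-1} - 1 + O(\xi)$. I choose $\widetilde{t}_1 = 2 + 2\xi + O(\xi)$ just above the first root and $\widetilde{t}_2 = \xi^{-1} - 1 + O(\xi)$ just below the second. Then $\xi t + f(t) < 1$ and $f(t) < 1$ on $[\widetilde{t}_1, \widetilde{t}_2)$, and together with $t \ge 2 > t^*_n$ both $S_n^\pm$ are empty there, which proves (b). For (d), $\widetilde{t}_3 := \xi^{-1} + 1$ satisfies $f(t) < 1/(t-1) < \xi < \xi t - 1$ for $t > \widetilde{t}_3$, so the upper $q_{n-1}$ bound in $S_n^-$ becomes vacuous, yielding $S_n^- = G_{n-1}(t)$ and $J_3 = J_1$; combined with $J_2 = 0$ this forces $\omega_n(\xi,t) = 0$. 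Part (c) is then immediate: $S_n^+$ remains empty so $J_2 = 0$, and $S_n^- \subseteq G_{n-1}(t)$ because $\xi t \le 2$ forces $\xi t - 1 \le 1$, so pointwise nonnegativity of the integrand gives $0 \le J_3 \le J_1$.

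For part (a), substitute $q_{n-1} = 1 + u$ in $J_2$ and $q_{n-1} = -1 + u$ in $J_3$ with $u \in [0, \xi t]$, and additionally apply the involution $q_i \mapsto -q_i$ for $i \le n-2$ inside $J_3$. Setting $X := \xi t^{n-1} + u(n-1) t^{n-2}$ and $Y := (n-1) t^{n-2} + \sum_{i=1}^{n-2} i q_i t^{i-1}$, both integrals become
\[
J_\pm = \int_0^{\xi t} \int_{U_\pm(u)} |X \pm Y|\, dq_1 \cdots dq_{n-2}\, du,
\]
where $U_\pm(u) = \bigl\{ (q_1,\ldots,q_{n-2}) \in [-1,1]^{n-2} : |(1 \mp u) t^{n-1} + \sum_{i=1}^{n-2} q_i t^i| \le 1 \bigr\}$. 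The identity $|X+Y| - |X-Y| = 2\,\mathrm{sgn}(XY) \min(|X|, |Y|)$ bounds the pointwise integrand discrepancy on $U_+(u) \cap U_-(u)$ by $2|X| \le 2\xi t^{n-1} + 2u(n-1) t^{n-2}$, and the two slabs $U_\pm(u)$ differ only by a shift of the hyperplane constraint by $2u t^{n-1}$, so by Lemma~\ref{lm-integr} the symmetric difference $U_+(u) \triangle U_-(u)$ has $(n-2)$-measure $\ll_n u\,t$ when the slab is active and measure zero when it is trivial (at small $t$). Integrating in $u$ over $[0, \xi t]$ and summing these two contributions yields $|J_2 - J_3| \ll_n \xi^2 t^n$ using $t \le \widetilde{t}_1 = O(1)$.

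The main obstacle is executing part (a) cleanly: the naive bound $|J_2 - J_3| \le J_2 + J_3$ gives only $O_n(\xi t^{n-1})$, weaker by a factor of $\xi t$. Recovering that factor requires simultaneously using the smallness of $|X|$ of order $\xi$ at $u = 0$ (one factor of $\xi$ via the $\min(|X|, |Y|)$ identity) and the vanishing of the region discrepancy at $u = 0$ (the other factor of $\xi t$ via the linear shift $2u t^{n-1}$), while keeping the estimates uniform across the transition between the slab-trivial regime (small $t$) and the slab-active regime (moderate $t$).
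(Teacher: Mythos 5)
Your proposal is correct and follows essentially the same route as the paper: for parts (b)–(d) you locate the thresholds by deciding when the $q_{n-1}$-constraint in $S_n(\xi,t)$ is redundant (your function $f(t) = \sum_{k=1}^{n-1} t^{-k}$ plays the role of the paper's left-hand side of \eqref{eq-t-cond}, and your quadratic $\xi t + 1/(t-1) = 1$ is precisely \eqref{eq-t-cond-bnd}); for part (a) you apply the same change of variables ($q_{n-1} = 1 + \theta$ in $J_2$, $\mathbf{q}\mapsto -\mathbf{q}$ and $q_{n-1} = -1 + \theta$ in $J_3$), split into the common region and the symmetric difference, and use that the integrand discrepancy is $O_n(\xi t^{n-1})$ on the intersection while the symmetric difference of the two shifted slabs has measure $O_n(\theta)$. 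Two minor slips worth noting: the $\pm/\mp$ labeling in your $U_\pm(u)$ is swapped relative to your own derivation of $J_\pm$ (harmless since the argument is symmetric), and in part (c) the reason $S_n^-\subseteq G_{n-1}(t)$ should be the implicit bound $|q_{n-1}|\le f(t)<1$ coming from the other constraints rather than ``$\xi t\le 2$'', since part (c) covers arbitrarily large $t$.
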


\begin{proof}
First of all, we find for which $t$ the equality $S_n(\xi,t)=G_{n-1}(t)$ holds.

Any $(q_1, q_2,\dots, q_{n-1})\in S_n(\xi,t)$ satisfies the system
\begin{equation}\label{eq-S-ineq}
\begin{cases}
-1+\xi t \le q_{n-1} \le 1+\xi t,\\
\frac{-1-q_1 t - \dots - q_{n-2} t^{n-2}}{t^{n-1}} \le q_{n-1} \le \frac{1-q_1 t - \dots - q_{n-2} t^{n-2}}{t^{n-1}}.
\end{cases}
\end{equation}
Obviously
\[
\min\limits_{\substack{|q_i|\le 1\\1\le i\le n-2}} \frac{-1-q_1 t - \dots - q_{n-2} t^{n-2}}{t^{n-1}} = -\sum_{k=1}^{n-1} t^{-k}.
\]
Therefore, the first-line inequality in \eqref{eq-S-ineq}
is needless (and we can just omit it) if $t$ satisfies the inequality
\begin{equation}\label{eq-t-cond}
-\sum_{k=1}^{n-1} t^{-k} \ge -1+\xi t.
\end{equation}
Apparently, for such $t$ we have $S_n(\xi,t)=G_{n-1}(t)$.
So our aim now is to find explicitly values $t$ for which $S_n(\xi,t)=G_{n-1}(t)$ for sure.

To give uniform bounds on $t$, we need to simplify \eqref{eq-t-cond} in some uniform (independent of $n$) way. It is easy to check that any solution $t$ of \eqref{eq-t-cond} must satisfy the condition $t>1$; otherwise, we would have a contradiction.
The inequality \eqref{eq-t-cond} is equivalent to
\[
-\frac{1}{t-1}\left(1-\frac{1}{t^{n-1}}\right) \ge -1+\xi t.
\]
Hence, one can readily see that any solution $t>1$ of
\begin{equation*}
-\frac{1}{t-1} \ge -1+\xi t
\end{equation*}
also meets \eqref{eq-t-cond}.
Solving the latter inequality, we get that $S_n(\xi,t)=G_{n-1}(t)$ for
\[
\widetilde{t}_1 := \frac{1+\xi - \sqrt{(1+\xi)^2-8\xi}}{2\xi} \le t \le \frac{1+\xi + \sqrt{(1+\xi)^2-8\xi}}{2\xi} =: \widetilde{t}_2,
\]
where $\widetilde{t}_1$ and $\widetilde{t}_2$ are the roots of the equation:
\begin{equation}\label{eq-t-cond-bnd}
\xi t^2 - (\xi+1) t + 2 = 0.
\end{equation}
One can directly develop $\widetilde{t}_1$ and $\widetilde{t}_2$ by degrees of $\xi$ and obtain the first few terms using undetermined coefficients with \eqref{eq-t-cond-bnd}:
\[
\widetilde{t}_1 = 2 + 2 \xi + O(\xi^2), 
\qquad \widetilde{t}_2 = \xi^{-1} - 1 + O(\xi). 
\]

Consider separately the three intervals: $[0,\widetilde{t}_1)$, $[\widetilde{t}_1,\widetilde{t}_2)$, $[\widetilde{t}_2, +\infty)$.

a) Let $0\le t < \widetilde{t}_1$.

In the integral $J_2$, make the change $q_{n-1}=1+\theta$. In the integral $J_3$, we make the change $\mathbf{q} \to -\mathbf{q}$ and next the change $q_{n-1} = 1-\theta$. Here $0<\theta<\xi t$.
Thus, we obtain
\begin{align*}
J_2 &= \int\limits_{\widetilde{S}_n^+(\xi,t)} \left|\xi t^{n-1} + (n-1)(1+\theta)t^{n-2} + \sum_{k=1}^{n-2} k q_k t^{k-1} \right|dq_1\,dq_2 \dots dq_{n-2}\,d\theta,\\
J_3 &= \int\limits_{\widetilde{S}_n^-(\xi,t)} \left|-\xi t^{n-1} + (n-1)(1-\theta)t^{n-2} + \sum_{k=1}^{n-2} k q_k t^{k-1} \right|dq_1\,dq_2 \dots dq_{n-2}\,d\theta,
\end{align*}
where
\begin{align*}
\widetilde{S}_n^+(\xi,t) &= \left\{ (q_1,\dots,q_{n-2},\theta) \in [-1,1]^{n-2}\times[0,\xi t] \ : \
\left|\sum_{k=1}^{n-2} q_k t^k + (1+\theta)t^{n-1}\right| \le 1
\right\},\\
\widetilde{S}_n^-(\xi,t) &= \left\{ (q_1,\dots,q_{n-2},\theta) \in [-1,1]^{n-2}\times[0,\xi t] \ : \
\left|\sum_{k=1}^{n-2} q_k t^k + (1-\theta)t^{n-1}\right| \le 1
\right\}.
\end{align*}
Denote the intersection and the union of $\widetilde{S}_n^-(\xi,t)$ and $\widetilde{S}_n^+(\xi,t)$:
\[
S_* = \widetilde{S}_n^-(\xi,t) \cap \widetilde{S}_n^+(\xi,t), \qquad
S^* = \widetilde{S}_n^-(\xi,t) \cup \widetilde{S}_n^+(\xi,t).
\]
Then we can write
\begin{align*}
|J_2 - J_3|\ &\le \ 2n\xi t^{n-1} \mes_{n-1} S_* + \left(n\xi t^{n-1} + \sum_{k=1}^{n-1} k t^{k-1}\right) \mes_{n-1} (S^*\setminus S_*)\\
& \le \ 2n\xi t^{n-1} \mes_{n-1} S^* + \left(\sum_{k=1}^{n-1} k t^{k-1}\right) \mes_{n-1} (S^*\setminus S_*).
\end{align*}
Obviously, $\mes_{n-1} S^*\le 2^{n-2} \xi t$.
The difference $S^*\setminus S_*$ may be represented in the form
\[
S^*\setminus S_* =
\left\{ (q_1,\dots,q_{n-2},\theta) \in [-1,1]^{n-2}\times[0,\xi t] \ : \
\frac{1}{t} - \theta t^{n-2} \le \left|q_1 +
\dots
\right| \le \frac{1}{t} + \theta t^{n-2}
\right\}.
\]
Hence,
\[
\mes_{n-1} (S^*\setminus S_*) \le 2^{n-1} t^{n-2} \int_0^{\xi t} \theta\,d\theta =
2^{n-2} \xi^2 t^n.
\]
Finally, for $t\in [0, \widetilde{t}_1)$ we have
\[
|J_2 - J_3| \le 2^{n-1} n \xi^2 t^n + \left(\sum_{k=1}^{n-1} k t^{k-1}\right) 2^{n-2} \xi^2 t^n =
2^{n-2} \left(2n + \sum_{k=1}^{n-1} k t^{k-1}\right) \xi^2 t^n.
\]

b) For $t\in [\widetilde{t}_1,\widetilde{t}_2)$ we have $S_n(\xi,t) = G_{n-1}(t)$, and so
\[
J_2 = J_3 = 0.
\]

c) For $t\ge \widetilde{t}_2$, since $S_n^+(\xi,t) = \varnothing$ and $S_n^-(\xi,t) \subseteq G_{n-1}(t)$, we have
\[
J_2 = 0, \quad 0\le J_3 \le J_1.
\]

d) Looking at the system \eqref{eq-S-ineq}, one can observe that $J_1 = J_3$ and so $\omega_n(\xi,t) = 0$, if
\begin{equation}\label{eq-t-cond2}
\sum_{k=1}^{n-1} t^{-k} \le -1 + \xi t.
\end{equation}
Now we simplify this inequality in the same way as \eqref{eq-t-cond}.
Evidently, \eqref{eq-t-cond2} has no positive solutions $t\le1$,
and is equivalent to
\[
\frac{t}{t-1}\left(1-\frac{1}{t^n}\right) \le \xi t.
\]
Hence, any $t>1$ satisfying
\begin{equation}\label{eq-t-cond2-x}
\frac{1}{t-1} \le \xi
\end{equation}
obeys \eqref{eq-t-cond2} too.
So, for these $t$, we surely have $J_1 = J_3$ and hence $\omega_n(\xi,t) = 0$.

The lemma is proved.
\end{proof}

\subsubsection{Proof of Theorems \ref{thm-main} and \ref{thm-total}. Final steps}\label{sbsbsec-proof-idiff}

Lemmas \ref{lm-J1-phi-diff} and \ref{lm-J2-J3-diff} immediately imply the corrollary.
\begin{lemma}\label{lm-delta}
Let $n\ge 3$ be a fixed integer. For all sufficiently small $\xi>0$, the function $\delta_n(\xi,t) := \omega_n(\xi,t)-\phi_{n-1}(t)$ can be estimated as follows.

A) From above we have
\[
\delta_n(\xi,t)\le
\begin{cases}
c(n) \xi^2 t^n, & |t|\le \widetilde{t}_1, \\
2^{n-2} \xi^2 t^2, & \widetilde{t}_1 < |t| < \xi^{-1/2}+1, \\
2^{n-1} \xi - \phi_{n-1}(t), & \xi^{-1/2}+1 \le |t| < \xi^{-1}+1,\\
-\phi_{n-1}(t), & |t|\ge \xi^{-1}+1,
\end{cases}
\]
where $c(n)$ is an explicit constant depending on $n$ only.

B) From below:
\[
\delta_n(\xi,t)\ge
\begin{cases}
\phantom{-}0, & |t|\le \widetilde{t}_2,\\
-\phi_{n-1}(t), & |t| > \widetilde{t}_2.
\end{cases}
\]

C) Exact value:
\[
\delta_n(\xi,t) =
\begin{cases}
2^{n-2} \xi^2 t^2, & t_1 \le |t| \le t_2, \\
2^{n-1} \xi - \phi_{n-1}(t), & \xi^{-1/2}+1 \le |t| < \widetilde{t}_2,\\
-\phi_{n-1}(t), & |t|\ge \xi^{-1}+1.
\end{cases}
\]

Here $t_1$, $t_2$ (both defined in Lemma \ref{lm-I1}), and $\widetilde{t}_1$, $\widetilde{t}_2$ (both defined in Lemma \ref{lm-J2-J3-diff}) have the asymptotics:
\begin{align*}
t_1 &= 2+\sqrt{2}+O(\xi), & t_2 &=\xi^{-1/2}-1+O(\xi^{1/2}),\\
\widetilde{t}_1 &= 2+O(\xi), &\widetilde{t}_2 &=\xi^{-1}-1+O(\xi).
\end{align*}

\end{lemma}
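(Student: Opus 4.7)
The plan is to use the decomposition $\omega_n(\xi,t) = J_1 + J_2 - J_3$ introduced in Section~\ref{sec-ndeg}, so that
\[
\delta_n(\xi,t) = \bigl(J_1 - \phi_{n-1}(t)\bigr) + (J_2 - J_3),
\]
and then to assemble on each $t$-interval the complementary bounds already proved in Lemmas~\ref{lm-J1-phi-diff} and~\ref{lm-J2-J3-diff}. The breakpoints that appear in the two lemmas, namely $1$, $\widetilde{t}_1\sim 2$, $t_1\sim 2+\sqrt{2}$, $t_2\sim \xi^{-1/2}-1$, $\xi^{-1/2}+1$, $\widetilde{t}_2\sim \xi^{-1}-1$, and $\xi^{-1}+1$, are nested in the stated order once $\xi$ is sufficiently small; verifying this from the displayed asymptotics is the first bookkeeping step and makes the case split unambiguous.

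For the upper bound in Part~A, on $|t|\le \widetilde{t}_1$ I would combine Lemma~\ref{lm-J1-phi-diff}(A)--(B), which gives $J_1-\phi_{n-1}(t)\le 2^{n-2}\xi^2 t^{2n-2}$ for $|t|\le 1$ and $\le 2^{n-2}\xi^2 t^2$ for $1\le|t|\le\widetilde{t}_1$, with $|J_2-J_3|\ll_n\xi^2 t^n$ from Lemma~\ref{lm-J2-J3-diff}(a). Since $n\ge 3$, one has $t^{2n-2}\le t^n$ for $|t|\le 1$ and $t^2\le t^n$ for $|t|\ge 1$, so both contributions absorb into a single $c(n)\xi^2 t^n$ bound. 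On $\widetilde{t}_1\le|t|<\xi^{-1/2}+1$, Lemma~\ref{lm-J2-J3-diff}(b)--(c) yields $J_2=0$ and $J_3\ge 0$, hence $J_2-J_3\le 0$, while Lemma~\ref{lm-J1-phi-diff}(B) gives the claimed $2^{n-2}\xi^2 t^2$ bound. On $\xi^{-1/2}+1\le|t|<\xi^{-1}+1$, Lemma~\ref{lm-J1-phi-diff}(C) identifies $J_1=2^{n-1}\xi$ exactly, and again $J_2-J_3\le 0$, so $\delta_n\le 2^{n-1}\xi-\phi_{n-1}(t)$. For $|t|\ge\xi^{-1}+1$, Lemma~\ref{lm-J2-J3-diff}(d) gives $\omega_n\equiv 0$ and therefore $\delta_n=-\phi_{n-1}(t)$.

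The lower bound of Part~B is obtained from the same decomposition. On $|t|\in[\widetilde{t}_1,\widetilde{t}_2]$, Lemma~\ref{lm-J2-J3-diff}(b) delivers $J_2-J_3=0$, while $J_1-\phi_{n-1}(t)\ge 0$ by Lemma~\ref{lm-J1-phi-diff}, giving $\delta_n\ge 0$ immediately. On the remaining small range $|t|\le\widetilde{t}_1$, where $J_1-\phi_{n-1}$ can be as small as $O(\xi^2 t^{2n-2})$, one has to revisit the $J_2-J_3$ computation from the proof of Lemma~\ref{lm-J2-J3-diff}(a) and observe that the dominant contribution coming from the $\pm\xi t^{n-1}$ term is nonnegative (the $+\xi t^{n-1}$ in $J_2$ and the $-\xi t^{n-1}$ in $J_3$ reinforce each other), while the $O(\xi^2 t^n)$ discrepancy caused by the unequal integration domains $S^*\setminus S_*$ is of strictly smaller order for $|t|$ bounded. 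For $|t|>\widetilde{t}_2$, the trivial bound $\omega_n\ge 0$ gives $\delta_n\ge -\phi_{n-1}(t)$.

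Part~C follows from reading off the equality cases. The identity $\delta_n=2^{n-2}\xi^2 t^2$ on $[t_1,t_2]$ comes from the exact equality in Lemma~\ref{lm-J1-phi-diff}(B) together with $J_2=J_3=0$ from Lemma~\ref{lm-J2-J3-diff}(b), which applies because $[t_1,t_2]\subset[\widetilde{t}_1,\widetilde{t}_2]$ for small $\xi$. The middle identity on $[\xi^{-1/2}+1,\widetilde{t}_2)$ combines Lemma~\ref{lm-J1-phi-diff}(C) with $J_2=J_3=0$, and the outermost identity is Lemma~\ref{lm-J2-J3-diff}(d). The only genuinely nontrivial step in this program is pinning down the sign of $J_2-J_3$ for $|t|\le\widetilde{t}_1$ sharply enough for Part~B; everything else is a careful matching of the already-established piecewise bounds, together with the elementary verification that the breakpoints of the two lemmas interlock in the asserted way as $\xi\to 0$.
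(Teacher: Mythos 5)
Your proposal follows the same route as the paper: the paper simply asserts that Lemma~\ref{lm-delta} is an immediate corollary of Lemmas~\ref{lm-J1-phi-diff} and~\ref{lm-J2-J3-diff}, and you supply the case-by-case matching that the paper leaves implicit. That matching is correct for Part~A in all four ranges, for Part~C in all three ranges, and for Part~B on $|t|\ge\widetilde{t}_1$ (where $J_2-J_3$ is either zero by Lemma~\ref{lm-J2-J3-diff}(b), or one uses $J_1\ge\phi_{n-1}$ and $\omega_n\ge 0$).

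The one place you rightly flag as nontrivial --- the lower bound $\delta_n\ge 0$ on $|t|\le\widetilde{t}_1$ --- is indeed not immediate from the stated forms of the two lemmas: Lemma~\ref{lm-J2-J3-diff}(a) gives only $|J_2-J_3|\ll_n\xi^2 t^n$ with no sign, while Lemma~\ref{lm-J1-phi-diff}(A) bounds $J_1-\phi_{n-1}(t)$ from above by $2^{n-2}\xi^2 t^{2n-2}$, which for small $t$ is of \emph{smaller} order than $\xi^2 t^n$, so the nonnegative term cannot be assumed to dominate. However, the fix you sketch does not hold up. You assert that the $S^*\setminus S_*$ contribution to $J_2-J_3$ is ``of strictly smaller order for $|t|$ bounded,'' but in the proof of Lemma~\ref{lm-J2-J3-diff}(a) the $S_*$-part and the $S^*\setminus S_*$-part are each $O(\xi^2 t^n)$, so neither one dominates when $t\asymp 1$. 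Moreover, even the $S_*$-part alone is not pointwise nonnegative: writing the two integrands as $|A\pm B|$ with $A=(n-1)t^{n-2}+\sum_{k\le n-2}k q_k t^{k-1}$ and $B=\xi t^{n-1}+(n-1)\theta t^{n-2}>0$, the signed difference $|A+B|-|A-B|=2\operatorname{sign}(A)\min(|A|,B)$ is negative wherever $A<0$, and for $n\ge 5$ and $t$ near $1$ the cube $\|q\|_\infty\le 1$ does contain points with $A<0$. So establishing $\delta_n\ge 0$ on $|t|\le\widetilde{t}_1$ requires a sharper argument about $J_2-J_3$ than either the paper or your proposal provides. This gap is benign for the rest of the paper: Lemma~\ref{lm-delta2} and the proofs of Theorems~\ref{thm-main}--\ref{thm-shelf} only use the two-sided bound $|\delta_n(\xi,t)|\ll_n\xi^2$ on $|t|\le t_1$, which follows from your Part~A estimate together with the absolute bound of Lemma~\ref{lm-J2-J3-diff}(a), without any sign information.
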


In \eqref{eq-omega} (see Theorem \ref{thm-main}) the enpoints $t=\pm \xi^{-1/2}$ and $t=\pm (\xi^{-1}+1)$ are obtained by adjusting $t=\xi^{-1/2}+1$ and $t=\widetilde{t}_2$ respectively. It is possible because we can hide in the remainder term (just adjusting its big-O-constant) anything that is $O(\xi)$.

For some values $t$, the function $\delta_n(\xi,t)$ is tough to calculate.
Fortunately, the remainder term in~\eqref{eq-O/omega} overrides some of these complexities.
Thus, to hide all ``roughnesses'' of $\delta_n(\xi,t)$, we invent a new function $\widetilde\delta_n(\xi,t)$ that assembles essential properties of $\delta_n(\xi,t)$ in a simple form.
Keeping
\[
\int_{-\infty}^{\infty} \left|\delta_n(Q^{-1},t) - \widetilde\delta_n(Q^{-1},t)\right| = O(Q^{-1}).
\]
is enough to get \eqref{eq-main} from \eqref{eq-O/omega}.

The following lemma shows how to build such a function $\widetilde\delta_n(\xi,t)$.
\begin{lemma}\label{lm-delta2}
Consider the function
\[
\widetilde\delta_n(\xi,t) =
\begin{cases}
2^{n-2} \xi^2 t^2, & |t| \le \xi^{-1/2}, \\
2^{n-1} \xi - \phi_{n-1}(t), & \xi^{-1/2} < |t| \le \xi^{-1},\\
-\phi_{n-1}(t), & |t| > \xi^{-1}.
\end{cases}
\]
Then
\[
\int_{-\infty}^{\infty} \left|\delta_n(\xi,t) - \widetilde\delta_n(\xi,t)\right| dt = O(\xi),
\]
where the big-O-constant depends on $n$ only.
\end{lemma}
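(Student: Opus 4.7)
The plan is to split the real line into finitely many intervals determined by the breakpoints of $\delta_n$ (from Lemma \ref{lm-delta}) and of $\widetilde\delta_n$ (from its definition in the statement) and to bound $|\delta_n - \widetilde\delta_n|$ on each piece. Since both functions are even in $t$, it suffices to work on $t\ge 0$ and double. The natural pieces are the central bounded region $[0,t_1]$, the first plateau $[t_1,t_2]$ where Lemma \ref{lm-delta} gives the exact identity $\delta_n = 2^{n-2}\xi^2 t^2 = \widetilde\delta_n$, the first transition zone $[t_2,\xi^{-1/2}+1]$, the long intermediate plateau $[\xi^{-1/2}+1,\widetilde{t}_2]$ where $\delta_n(\xi,t) = 2^{n-1}\xi - \phi_{n-1}(t)$ exactly, the second transition zone $[\widetilde{t}_2,\xi^{-1}+1]$, and the tail $[\xi^{-1}+1,\infty)$ where $\delta_n = -\phi_{n-1}(t)$ exactly.

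The key auxiliary ingredient I will need is the tail asymptotic
\[
\phi_{n-1}(t) = 2^{n-2}\, t^{-2} + O(t^{-3}) \qquad (|t|\to\infty).
\]
I will derive it from the functional equation $\phi_{n-1}(t^{-1}) = t^2 \phi_{n-1}(t)$ recorded in Theorem \ref{thm-algnum}. The direct evaluation at $t=0$ from \eqref{eq-phi-def} leaves only the $k=1$ term and gives $\phi_{n-1}(0) = \int_{[-1,1]^{n-1}} |p_1|\,d\mathbf{p} = 2^{n-2}$. The integrand and the domain in \eqref{eq-phi-def}--\eqref{eq-G-def} depend smoothly on $t$ near $0$, so $\phi_{n-1}(u) = 2^{n-2} + O(u)$ for small $u$. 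Substituting $u = 1/t$ and multiplying by $t^{-2}$ yields the claim.

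With this in place, each piece is handled routinely. On $[0,t_1]$, both functions are $O(\xi^2)$ on a bounded interval, contributing $O(\xi^2)$. On $[t_1,t_2]$ the integrand is identically $0$. Each of the two transition zones $[t_2,\xi^{-1/2}+1]$ and $[\widetilde{t}_2,\xi^{-1}+1]$ has length $O(1)$, and on each, Lemma \ref{lm-delta} together with inspection of $\widetilde\delta_n$ gives $|\delta_n|, |\widetilde\delta_n| \ll_n \xi$, so each contributes $O(\xi)$. On the long intermediate plateau $[\xi^{-1/2}+1,\widetilde{t}_2]$ the two functions agree modulo $\phi_{n-1}(t) - 2^{n-2}t^{-2}$, so the tail asymptotic yields
\[
\int_{\xi^{-1/2}+1}^{\widetilde{t}_2} \left|\delta_n - \widetilde\delta_n\right| dt \ll_n \int_{\xi^{-1/2}}^{\xi^{-1}} t^{-3}\,dt = O(\xi).
\]
On the final tail $[\xi^{-1}+1,\infty)$ the same estimate gives $\int_{\xi^{-1}}^{\infty} t^{-3}\,dt = O(\xi^2)$.

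Summing the contributions (and multiplying by $2$ for the symmetric negative half-line) yields $O(\xi)$, as required. The only genuine obstacle is the derivation of the tail asymptotic $\phi_{n-1}(t) = 2^{n-2}t^{-2}+O(t^{-3})$; everything else is accounting that the transition zones happen to be of length $O(1)$, so that the $O(\xi)$-pointwise bounds there cost only $O(\xi)$ in total, while on the plateaus the $O(t^{-3})$ bound on the difference $\phi_{n-1}(t)-2^{n-2}t^{-2}$ is integrable with total mass $O(\xi)$.
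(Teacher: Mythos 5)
Your proof is correct, and its skeleton is the same as the paper's: the same splitting of $t\ge 0$ into $[0,t_1]$, $[t_1,t_2]$ (where Lemma \ref{lm-delta} gives exact agreement), the two $O(1)$-length transition zones with pointwise bounds $\ll_n\xi$, the long plateau, and the tail, with evenness doubling at the end. The one place where you genuinely diverge is the key auxiliary fact about $\phi_{n-1}$ at infinity. The paper quotes the explicit closed form $\phi_{n-1}(t)=2^{n-2}\bigl(1+\tfrac13\sum_{k=1}^{n-2}(k+1)^2t^{2k}\bigr)$ for small $|t|$ from \cite{Koleda2017} and transfers it by the functional equation, obtaining $\phi_{n-1}(t)-2^{n-2}t^{-2}=O(t^{-4})$ and hence a contribution $O(\xi^{3/2})$ from the plateau and tail; you instead derive the weaker $\phi_{n-1}(t)=2^{n-2}t^{-2}+O(t^{-3})$ from the evaluation $\phi_{n-1}(0)=2^{n-2}$, local Lipschitz continuity at $0$, and the functional equation, which still gives $\int_{\xi^{-1/2}}^{\infty}t^{-3}\,dt\asymp\xi$ and hence the required $O(\xi)$. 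Your route is more self-contained (no appeal to the explicit series), at the cost of a slightly worse intermediate bound that the statement happily absorbs. One small point you should make explicit: the "smooth dependence near $t=0$" is most cleanly justified by observing that for $|t|\le 1/2$ the constraint $\bigl|\sum_{k=1}^{n-1}p_kt^k\bigr|\le 1$ is automatically satisfied on the cube, so $G_{n-1}(t)=[-1,1]^{n-1}$ is \emph{constant} there and only the integrand varies, Lipschitz in $t$ uniformly on the cube; with that remark, $\phi_{n-1}(u)=2^{n-2}+O_n(u)$ is immediate and your argument is complete.
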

\begin{proof}
Here we use the notation of Lemma \ref{lm-delta}. For the sake of brevity, in this proof we omit the expression $|\delta_n(\xi,t) - \widetilde\delta_n(\xi,t)| dt$ under integrals.

For values $t$ in the intervals $[0,t_1)$, $[t_2,\xi^{-1/2}+1)$ and $\left[\widetilde{t}_2,\xi^{-1}+1\right]$, we trivially estimate
\[
|\delta_n(\xi,t) - \widetilde\delta_n(\xi,t)| \le |\delta_n(\xi,t)|+|\widetilde\delta_n(\xi,t)| \ll_n
\begin{cases}
\xi^2, & |t|\le t_1,\\
\xi, & |t|\ge t_2.
\end{cases}
\]
Since the length of all the three intervals is $O(1)$, we readily obtain
\[
\int_0^{t_1} \ll_n \xi^2, \qquad
\int_{t_2}^{\xi^{-1/2}+1} \ll_n \xi, \qquad
\int_{\widetilde{t}_2}^{\xi^{-1}+1} \ll_n \xi.
\]

For the rest of values $t$ we obviously have $\delta_n(\xi,t) = \widetilde\delta_n(\xi,t)$, and thus
\[
\int_{t_1}^{t_2} = 0,
\qquad
\int_{\xi^{-1/2}+1}^{\widetilde{t}_2} = 0,
\qquad
\int_{\xi^{-1}+1}^\infty = 0.
\]

Noticing that both $\delta_n(\xi,t)$ and $\widetilde\delta_n(\xi,t)$ are even finishes the proof of the lemma.
By the way, we proved Theorem~\ref{thm-main} too, where $\widetilde\omega_n(\xi,t):= \phi_{n-1}(t)+\widetilde\delta_n(\xi,t)$.
\end{proof}

Now we prove Theorem \ref{thm-total}.
From Lemma \ref{lm-delta2} we have
\[
\int_0^{\infty} (\omega_n(\xi,t)-\phi_{n-1}(t))\,dt =
\int_0^{\infty} \widetilde\delta_n(\xi,t)\,dt + O(\xi).
\]
Joining the integrals over the intervals $[0,\xi^{-1/2})$, $[\xi^{-1/2}, \xi^{-1})$, and $[\xi^{-1},+\infty)$,
we have
\[
\begin{aligned}
\int_0^{\infty} \widetilde\delta_n(\xi,t)\,dt &=
2^{n-2}\left(
\xi^2 \int_0^{\xi^{-1/2}} t^2\, dt +
2\xi \int_{\xi^{-1/2}}^{\xi^{-1}} dt\right) -
\int_{\xi^{-1/2}}^\infty \phi_{n-1}(t)\,dt \\
&=2^{n-1} \left(1 - \frac{4}{3}\xi^{1/2}\right) + O(\xi^{3/2}),
\end{aligned}
\]
where the integral
\[
\int_{\xi^{-1/2}}^\infty \phi_{n-1}(t)\,dt = 2^{n-2}\xi^{1/2} + O(\xi^{3/2})
\]
is calculated for $\xi\le (2+\sqrt{2})^{-2}$ using the expression~\eqref{eq-analit-f2}.

Since both $\omega_n(\xi,t)$ and $\phi_{n-1}(t)$ are even functions with respect to~$t$, we get
\begin{equation}\label{eq-idiff}
2^{-n}\int_{-\infty}^\infty \left(\omega_n(\xi,t)-\phi_{n-1}(t)\right)dt = 1 - \frac{4}{3}\,\xi^{1/2} + O(\xi).
\end{equation}
The latter equation and Theorem \ref{thm-int-counterpart} together give Theorem \ref{thm-total} for $n\ge 3$.
The quadratic case is covered by Theorem~\ref{thm-total-2}.

\addcontentsline{toc}{section}{References}

\bibliographystyle{abbrv}
\bibliography{bib8-eng}

\bigskip

{\small\noindent Denis Koleda (Dzianis Kaliada)}\\
{\footnotesize
{Institute of Mathematics, National Academy of Sciences of Belarus,\\
220072 Minsk, Belarus}\\
e-mail: koledad@rambler.ru
}

\end{document}